\DeclareMathOperator{\rk}{rk}
\DeclareMathOperator{\diag}{diag}
\DeclareMathOperator{\End}{End}
\DeclareMathOperator{\Aut}{Aut}
\DeclareMathOperator{\GL}{GL}
\DeclareMathOperator{\Irr}{Irr}
\DeclareMathOperator{\ind}{ind}
\DeclareMathOperator{\res}{res}
\DeclareMathOperator{\Hom}{Hom}
\DeclareMathOperator{\Gal}{Gal}
\newtheorem{theorem}{Theorem}[section]
\newtheorem{main_theorem}{Theorem}
\newtheorem{corollary}[theorem]{Corollary}
\newtheorem{lemma}[theorem]{Lemma}
\newtheorem{proposition}[theorem]{Proposition}
\theoremstyle{definition}
\newtheorem{definition}[theorem]{Definition}
\theoremstyle{remark}
\newtheorem{remark}[theorem]{Remark}
\newtheorem*{remark*}{Remark}
\newtheorem{example}[theorem]{Example}
\newcommand{\Z}{{\mathbb Z}}
\newcommand{\Q}{{\mathbb Q}}
\newcommand{\R}{{\mathbb R}}
\newcommand{\CC}{{\mathbb C}}
\newcommand{\HH}{{\mathbb H}}
\newcommand{\N}{{\mathbb N}}
\newcommand\X{\ensuremath{\mathcal X}}
\title{Complex Vasquez invariant}
\author{Anna Gąsior and Rafał Lutowski}
\date{October 17, 2023}
\begin{document}

\maketitle

\renewcommand{\thefootnote}{\fnsymbol{footnote}}
\footnotetext{2020 \emph{Mathematics Subject Classification.} Primary: 32Q15, 20H15 Secondary: 32Q55, 32G05, 20C10}
\footnotetext{\emph{Keywords and phrases.} Flat manifolds, K\"ahler manifolds, hyperelliptic manifolds, Chern classes, Vasquez number}
\renewcommand{\thefootnote}{\arabic{footnote}}

\begin{abstract}
In 1970 Vasquez proved that to every finite group $G$ we can assign a natural number $n(G)$ with the property that every flat manifold with holonomy $G$ is a total space of a fiber bundle, with the fiber being a flat torus and the base space -- a flat manifold of dimension less than or equal to $n(G)$. In particular, this means that the characteristic algebra of any flat manifold with holonomy $G$ vanishes in dimension greater than $n(G)$. We define a complex analog of Vasquez invariant, in which finite groups are considered as holonomy groups of compact flat K\"ahler manifolds.
\end{abstract}

\section{Introduction}

Let $X$ be an $n$-dimensional \emph{flat manifold}, i.e. a closed connected Riemannian manifold with vanishing sectional curvature. It is well known (see \cite{Sz12}) that a torsion-free group $\Gamma := \pi_1(X)$ defines
a short exact sequence
\begin{equation}
\label{eq:exact}
0 \longrightarrow L \longrightarrow\Gamma \stackrel{p}{\longrightarrow} G \longrightarrow 1,
\end{equation} 
where the free abelian group $L$, of rank $n$, is the unique maximal abelian normal subgroup of $\Gamma$ and $G$ is a finite group. We shall call $\Gamma$ a \emph{Bieberbach group} of dimension $n$ with the \emph{holonomy group} $G$.
By conjugation in $\Gamma,$ the above extension defines a $G$-lattice structure on $L$. The corresponding representation $h \colon G\to \GL(L)$ is called the \emph{integral holonomy representation} of $\Gamma$.

In \cite{V70}, Vasquez (see also \cite{CW89} and \cite{Sz97}) assigned to every finite group $G$ a natural number $n(G)$. He proved that if $\Gamma$ is a Bieberbach group with holonomy $G$, as in \eqref{eq:exact}, then there exists a normal subgroup $N$ of $\Gamma$ such that $N \subset L$ and the quotient group $\Gamma/L$ is a Bieberbach group of dimension less than or equal to $n(G)$.  

Vasquez showed that $n(G) = 1$ for every cyclic group $G$ of a prime order. In \cite{CW89} Cliff and Weiss proved that if $G$ is a $p$-group, then $n(G) = \sum_{C\in \mathcal{X}}[G:H]$, where $\mathcal{X}$ is a set of representatives of the conjugacy classes of subgroups of $G$ of prime order. Moreover $n(A_5) = 16$, see \cite{CZ96}. The articles \cite{Sz97} and \cite{F14} give the full classification of finite groups with Vasquez invariant equal to $1$ and $2$, respectively. In \cite{CP22} the authors consider the Vasquez invariant for elementary abelian groups. 

The goal of the paper is a description of the complex analog of the Vasquez invariant. A \emph{compact flat K\"ahler} or \emph{generalized hyperelliptic} manifold $X$ of dimension $n$ is defined as a quotient of a compact complex $n$-torus by a free action of a finite group. The fundamental group $\Gamma = \pi_1(X)$ is a Bieberbach group of dimension $2n$. In particular, $\Gamma$ may be realized as a subgroup of $U(n) \ltimes \CC^n \subset O(2n) \ltimes \R^{2n}$, see \cite[Proposition 7.1]{Sz12}. The classes of generalized hyperelliptic and aspherical K\"ahler manifolds coincide (see \cite[Theorem 1]{CC17}). We will follow \cite{BR11} and call fundamental groups of compact K\"ahler flat manifolds \emph{aspherical K\"ahler groups}. It is well known that any finite group is a holonomy group of a generalized hyperelliptic manifold (see Remark \ref{remark:holonomy-of-kahler}).

\begin{remark*}
Unless stated otherwise, whenever we say about the dimension of an aspherical K\"ahler group, we mean its complex dimension.
\end{remark*}

\begin{main_theorem}
\label{theorem:complex-vasquez}
Let $G$ be a finite group. Then there is an integer $n_{\CC}(G)$ such that if $\Gamma$ is an aspherical K\"ahler group of dimension $n$ and with holonomy group $G$, then the maximal abelian subgroup $L \subset\Gamma$ contains a subgroup $M$, normal in $\Gamma$, such that $\Gamma/M$ is an aspherical K\"ahler group of dimension less than or equal to $n_{\CC}(G).$
\end{main_theorem}

Although it is not as direct as in the real case, using the above theorem we can also formulate a result concerning characteristic classes of compact flat K\"ahler manifolds.

\begin{main_theorem}
\label{theorem:chern-classes}
Let $X$ be a generalized hyperelliptic manifold with the holonomy group $G$. Then for every integer $i > n_{\CC}(G)$ the $i$-th Chern class of $X$ is zero.
\end{main_theorem}

The structure of the paper is as follows. In Section 2 we provide a modified proof of the original Vasquez result, which was suggested by Cliff and Weiss in \cite{CW89}. This gives us a better estimate of the invariant and allows us to understand the idea standing behind its complex analog. The next section deals with essentially complex modules. Although the topic was presented in \cite{Jo90}, we give a slightly different approach here, suited for the proof of Theorem \ref{theorem:complex-vasquez}, presented along with some examples in Section 4. The algebraic approach from this section is necessary, but not sufficient if one would like to consider characteristic classes of holomorphic tangent bundles. Hence in Section 5, we give a criterion for a smooth map that arises from algebraic construction to be a holomorphic one. This condition may require the complex structure to be changed. Section 6 describes how to make this change while keeping the holomorphic tangent bundle unchanged (up to isomorphism of course). These results are then applied to show Theorem \ref{theorem:chern-classes} in the last section of the article.

\section{Modified Vasquez construction}

Let $\Gamma$ be a Bieberbach group defined by the short exact sequence \eqref{eq:exact}. A cohomology class $\alpha \in H^2(G,L)$ that corresponds to this extension is called \emph{special}. In fact, a faithful $G$-lattice and a special element make all that is needed to define a Bieberbach group. Hence, we can formulate Vasquez's theorem in the module-theoretic setting:

\begin{theorem}[{\cite[Theorem 3.6]{V70}}]
\label{thm:vasquez}
For any finite group $G$ there exists a number $n(G)$ such that if $L$ is a faithful $G$-lattice with a special element $\alpha$ then there exists a $\Z$-pure submodule $N$ of $L$ such that:
\begin{enumerate}[label=\rm(\roman*)]
	\item $\rk_{\Z}(L/N) \leq n(G)$,
	\item $\nu_*(\alpha)$ is special,
\end{enumerate}
where $\nu \colon L \to L/N$ is the natural homomorphism.
\end{theorem}

\begin{remark}
\label{remark:pure_submodule}
We consider quotient $G$-lattices which are free abelian groups. In other words, we demand from sublattices to be $\Z$-pure submodules (see \cite[(16.15)]{CR62}). It is easy to check that an intersection of a finite number of $\Z$-pure sublattices of a given lattice is again $\Z$-pure sublattice.
\end{remark}

In his proof, Vasquez focused on giving an upper bound on $n(G)$. In \cite{CW89} Cliff and Weiss, by using different methods, achieved a better estimate of it. They remarked that a slight modification of the proof of Vasquez could be used to get their result. Since this is of much importance to our further considerations, we give a proof of Vasquez's theorem using the hint given by Cliff and Weiss.

Before giving the proof, let us note that estimating $n(G)$ from above is not very precise. It is natural to demand from $n(G)$ to be as small as possible. Hence we give the following definition of the Vasquez number.

\begin{definition}
\label{definition:vasquez_number}
For any finite group $G$ a \emph{Vasquez number} $n(G)$ is the smallest natural number which satisfies the conclusion of Theorem \ref{thm:vasquez}.
\end{definition}

\begin{proof}[Proof of Theorem \ref{thm:vasquez}]
Recall that $\alpha \in H^2(G,L)$ is special if and only if its restriction to every nontrivial subgroup of $G$ is non-zero. Since the restriction on chains of subgroups of $G$ is transitive, using in addition the standard action of $G$ on those restrictions (see \cite[page 65]{Sz12}, \cite[page 168]{Ch86}) one easily gets that $\alpha$ is special if and only if
\[
\forall_{H \in \X} \res_H \alpha \neq 0,
\]
where $\X$ is a set of representatives of conjugacy classes of subgroups of $G$ of prime order.

Now take $H \in \X$. Since $\res_H \alpha \neq 0$, we get that as an $H$-module, $L$ has a direct summand $L_0$ of rank $1$, which admits the trivial $H$-action. Hence we have 
\begin{equation}
\label{eq:res_decomposition}
\res_{H} L = L_0 \oplus L_0'.
\end{equation}
Furthermore, this decomposition can be taken in such a way that if
\[
\res_H \alpha = \alpha_0 + \alpha_0' \in H^2(H,L_0) \oplus H^2(H,L_0')
\]
then $\alpha_0 \neq 0$. Now, $L_0'$ may be not a $G$-submodule of $L$, but
\[
L_H' := \cap_{g \in G} gL_0'
\]
is one. Moreover, since $gL_0'$ is a $\Z$-pure submodule of the free $\Z$-module $L$ (see \cite[Theorem 16.16]{CR62}), by Remark \ref{remark:pure_submodule} $L/L_H'$ is a free abelian group. By the first isomorphism theorem for modules, there exists the unique map $r$, such that the following diagram commutes
\begin{equation}
\label{eq:restriction}
\begin{tikzcd}
\res_H L \arrow[rdd,"\pi"] \arrow[rr, "\res_H p^{(H)}"] & & \res_H L/L_H' \arrow[ldd, "r"] \\
\\
& L_0
\end{tikzcd}
\end{equation}
where $p^{(H)} \colon L \mapsto L/{L_H'}$ is the natural mapping and $\pi$ is the projection corresponding to decomposition \eqref{eq:res_decomposition}. Hence, if 
\[
\res_H p^{(H)}_*(\alpha) = (\res_H p^{(H)})_*(\res_H \alpha) = 0,
\]
then 
\[
\pi_*(\res_H \alpha) = \alpha_0 = 0,
\] 
which contradicts our assumptions.

In the Frobenius reciprocity
\[
\Hom_H(\res_H L, L_0) \cong \Hom_G (L, \ind_H^G L_0),
\]
the kernel of the map corresponding to $\pi$ is equal to $L_H'$. Hence we get
\[
\rk_{\Z}(L/L'_H) \leq \rk_{\Z} \ind_H^G L_0 = [G:H].
\]

Summarizing, for a group $H \in \X$ we have constructed a $G$-sublattice $L_H'$ of $L$ such that $L/L_H'$ is again a $G$-lattice of $\Z$ rank bounded by the index of $H$ in $G$ and that the restriction to $H$ of class induced by $\alpha$ is nonzero.

Let $N := \cap_{H \in \X} L_H'$. This is a $G$-sublattice of $L$ and since $\X$ is finite, by Remark \ref{remark:pure_submodule}, $L/N$ is torsion-free. Let $\nu \colon L \to L/N$ be the natural homomorphism. Making diagrams similar to \eqref{eq:restriction} we find that $\nu_*(\alpha)$ is special. Moreover, we have
\[
\rk_{\Z} (L/N) \leq \sum_{H \in \X} \rk_{\Z}(L/L'_H) \leq \sum_{H \in \X} [G:H].
\]
Therefore, we obtain an estimate of $n(G)$ given in \cite[Corollary on page 125]{CW89}:
\[
n(G) \leq \sum_{H \in \X} [G:H].
\]
\end{proof}

\section{Essentially complex modules}

Throughout this section $G$ will denote a finite group, and $K$ -- the ring $\Z$, or the field $\Q$ or $\R$.

\begin{remark}
\label{remark:extending_field}
Since the extending of the ring/field of scalars will be frequently used throughout the paper, for any subfield $F$ containing $K$ we introduce the following notation:
\[
L^F := F \otimes_K L.
\]
\end{remark}

\begin{definition}
Let $V$ be a $K G$-module. \emph{Complex structure} on $V$ is a map $J \in \End_{\R G}(V^\R)$ such that $J^2 = -id$. A module admitting a complex structure is called \emph{essentially complex}.
\end{definition}

By Johnson, we have the following criterion for a module being essentially complex for the case $K = \R$.

\begin{theorem}[{\cite[Proposition 3.1]{Jo90}}]
\label{theorem:complex-r-module}
Let $V$ be an $\R G$-module. The following are equivalent:
\begin{enumerate}
	\item $V$ is essentially complex.
	\item Every homogeneous component of $V$ is essentially complex.
	\item Every absolutely irreducible component of $V$ occurs with even multiplicity.
\end{enumerate}
\end{theorem}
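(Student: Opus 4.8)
The plan is to prove the three conditions equivalent by passing through the semisimple structure of $\End_{\R G}(V)$ and reducing everything to one question: when does a matrix algebra over a real division algebra contain a square root of $-\mathrm{id}$? First I would record the isotypic decomposition $V = \bigoplus_{i} V_i$, where $V_i \cong W_i^{m_i}$ is the homogeneous component belonging to the real irreducible $W_i$ with multiplicity $m_i$. Since Schur's lemma annihilates all homomorphisms between non-isomorphic irreducibles, $\End_{\R G}(V) = \prod_i \End_{\R G}(V_i)$, so any $J \in \End_{\R G}(V)$ splits as $J = (J_i)_i$ with $J_i \in \End_{\R G}(V_i)$, and $J^2 = -\mathrm{id}$ on $V$ precisely when $J_i^2 = -\mathrm{id}$ on each $V_i$. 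This yields $(1)\Leftrightarrow(2)$ at once: a global complex structure is the same datum as a complex structure on each homogeneous component.

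The core is then to decide, for a fixed $V_i \cong W_i^{m_i}$, whether it is essentially complex. Writing $D_i := \End_{\R G}(W_i)$, Schur's lemma makes $D_i$ a finite-dimensional division algebra over $\R$, so by Frobenius's theorem $D_i \in \{\R, \CC, \HH\}$, and $\End_{\R G}(V_i) \cong M_{m_i}(D_i)$. Thus $V_i$ is essentially complex iff $M_{m_i}(D_i)$ contains $J_i$ with $J_i^2 = -\mathrm{id}$. If $D_i = \CC$ or $D_i = \HH$, the scalar matrix $J_i$ given by an imaginary unit of $D_i$ times $\mathrm{id}$ works for every $m_i$, so $V_i$ is always essentially complex. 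If $D_i = \R$, then $J_i \in M_{m_i}(\R)$ and $J_i^2 = -\mathrm{id}$ forces $(\det J_i)^2 = \det(-\mathrm{id}) = (-1)^{m_i}$; the left side is nonnegative, so $m_i$ must be even, and conversely a block-diagonal sum of copies of $\left(\begin{smallmatrix} 0 & -1\\ 1 & 0\end{smallmatrix}\right)$ supplies such a $J_i$ when $m_i$ is even. Hence $V_i$ is essentially complex if and only if $D_i \neq \R$ or $m_i$ is even.

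Finally I would translate this trichotomy into the language of (3). By Schur's lemma $D_i = \R$ is equivalent to $W_i$ being absolutely irreducible, since having endomorphism algebra equal to the base field is exactly the condition that $W_i$ remains irreducible after extension of scalars to $\CC$. Therefore the statement that every $V_i$ is essentially complex is equivalent to the statement that every absolutely irreducible component $W_i$ occurs with even multiplicity $m_i$, which is precisely (3). Combined with $(1)\Leftrightarrow(2)$ this closes the cycle $(1)\Leftrightarrow(2)\Leftrightarrow(3)$.

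The main obstacle is the clean reduction in the second step: one must verify that a complex structure on $V$ genuinely respects the isotypic decomposition, so that it decomposes as a product of component-wise structures, and that it is the division-algebra type of $W_i$, not merely $\dim_\R W_i$, that governs the parity condition. The delicate case is $D_i = \R$, where the determinant (equivalently, the forced pairing of eigenvalues $\pm\sqrt{-1}$) compels even multiplicity; for the complex and quaternionic types the imaginary unit already present in $D_i$ removes any constraint, which is exactly why the hypothesis in (3) is imposed only on the absolutely irreducible constituents.
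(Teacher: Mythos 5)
Your proof is correct, but note that the paper offers no proof of this statement to compare against: it is imported verbatim from Johnson \cite[Proposition 3.1]{Jo90}, and the paper's only gloss on it is Remark \ref{remark:complex-quaternionic-case}, which records exactly your trichotomy $\End_{\R G}(W_i) \cong \R, \CC, \HH$. Your argument is the standard (and essentially Johnson's) route: isotypic decomposition plus Schur's lemma gives $\End_{\R G}(V) \cong \prod_i M_{m_i}(D_i)$, so a complex structure automatically preserves each homogeneous component, settling $(1)\Leftrightarrow(2)$; Frobenius's theorem reduces $(2)\Leftrightarrow(3)$ to the existence of a square root of $-\mathrm{id}$ in $M_{m}(D)$, trivial for $D = \CC, \HH$ via a scalar imaginary unit, and governed by the parity of $m$ for $D = \R$ via your determinant computation $(\det J_i)^2 = (-1)^{m_i}$. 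The one step you assert rather than prove --- that $D_i = \R$ is equivalent to absolute irreducibility of $W_i$ --- is genuinely needed to match condition (3) and follows from the base-change isomorphism $\End_{\CC G}(W_i^\CC) \cong \CC \otimes_\R \End_{\R G}(W_i)$ together with the fact that a semisimple $\CC G$-module is irreducible precisely when its endomorphism algebra is $\CC$ (the cases $D_i = \CC$ and $D_i = \HH$ yield $\CC \times \CC$ and $M_2(\CC)$ respectively, so $W_i^\CC$ is reducible there); it would be worth a sentence, but it is standard and does not constitute a gap.
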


\begin{remark}
\label{remark:holonomy-of-kahler}
Let $G$ be a finite group. By the Auslander and Kuranishi theorem, $G$ is a holonomy group of some Bieberbach group (see \cite[Theorem III.5.2]{Ch86}). Hence there exists a faithful $G$-lattice $L$ and a special element $\alpha \in H^2(G,L)$. Then $L \oplus L$ is also a faithful $G$ lattice, which has a special element, for example $\alpha = \alpha + 0 \in H^2(G,L)\oplus H^2(G,L)$. Hence, by \cite[Proposition 7.1]{Sz12} every finite group is a holonomy group of an aspherical K\"ahler group.
\end{remark}

Let $\Irr(G)$ be the set of complex irreducible characters of $G$ and $\chi \in \Irr(G)$. The Frobenius-Schur indicator 
\[
\nu_2(\chi) = \frac{1}{|G|} \sum_{g \in G} \chi(g^2),
\]
which takes values in $\{-1,0,1\}$ establishes a well-known connection between $\R G$ and $\CC G$-modules (see \cite[Section II.13.2]{S75}). We put it here in a more general context, first stating the following lemma. For characters $\chi_1,\chi_2$ of the group $G$, by $(\chi_1,\chi_2)$ we denote the usual inner product of $\chi_1$ and $\chi_2$.

\begin{lemma}
\label{lemma:fsi}
Let $V$ be a simple $K G$-module with the character $\chi_V$.  Let $\chi_s \in \Irr(G)$ be such that $(\chi_V,\chi_s) \neq 0$. Then 
\[
(\chi_V,\chi) \neq 0 \Rightarrow \nu_2(\chi) = \nu_2(\chi_s)
\]
for every $\chi \in \Irr(G)$.
\end{lemma}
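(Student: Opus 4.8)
The plan is to determine exactly which complex irreducible characters occur in $\chi_V$, show that they form a single orbit under the relevant Galois group, and then observe that $\nu_2$ is constant on each such orbit. Since both $\chi_s$ and any $\chi$ with $(\chi_V,\chi)\neq 0$ occur in $\chi_V$, they will lie in one common orbit, and the desired equality $\nu_2(\chi)=\nu_2(\chi_s)$ follows at once.

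First I would record the key invariance property of the Frobenius--Schur indicator. For any automorphism $\sigma$ of $\CC$ (automatically fixing $\Q$ pointwise) and any $\chi\in\Irr(G)$, the Galois conjugate $\chi^{\sigma}$, defined by $\chi^{\sigma}(g)=\sigma(\chi(g))$, is again an irreducible character, and since $\sigma$ commutes with the finite sum and fixes the scalar $1/|G|$,
\[
\nu_2(\chi^{\sigma})=\frac{1}{|G|}\sum_{g\in G}\sigma\bigl(\chi(g^2)\bigr)=\sigma\bigl(\nu_2(\chi)\bigr).
\]
Because $\nu_2(\chi)\in\{-1,0,1\}\subset\Q$ is fixed by $\sigma$, this yields $\nu_2(\chi^{\sigma})=\nu_2(\chi)$; in particular $\nu_2(\bar\chi)=\nu_2(\chi)$. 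Thus $\nu_2$ is constant on every Galois orbit of $\Irr(G)$, and this is the only nontrivial property of $\nu_2$ that the argument needs.

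Next I would identify the complex constituents of $\chi_V$ with a single such orbit, treating the three values of $K$ by the same mechanism. For $K=\R$, the classification of real representations by Frobenius--Schur type (see \cite[\S II.13.2]{S75}) shows that the complexification of the simple module $V$ has character $\chi$, $\chi+\bar\chi$, or $2\chi$ according as $\nu_2=1,0,-1$; in every case the set of constituents is $\{\chi\}$ or $\{\chi,\bar\chi\}$, a single orbit of $\Gal(\CC/\R)$. For $K=\Q$, the standard theory of Schur indices over fields of characteristic zero (see \cite{CR62}) gives $\chi_V=m\sum_{\sigma}\chi_s^{\sigma}$, the sum running over the $\Gal(\overline{\Q}/\Q)$-orbit of $\chi_s$ and $m$ the Schur index, so the constituents again form one Galois orbit. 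For $K=\Z$ the character $\chi_V$ equals that of the $\Q G$-module $V^{\Q}$, and $V^{\Q}$ is simple (a nontrivial $\Q G$-decomposition of $V^{\Q}$ would meet $V$ in a proper nonzero $\Z$-pure sublattice), so this case reduces to $K=\Q$.

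Combining the two steps completes the proof: $\chi$ and $\chi_s$ both occur in $\chi_V$, hence lie in the same Galois orbit, on which $\nu_2$ is constant by the first step. I expect the main obstacle to be the second step---correctly invoking the structure theorem that the complex constituents of a simple module over $\Q$ (respectively $\R$) form exactly one Galois orbit, which is what supplies the needed transitivity---rather than the first step, which is an elementary rationality observation. Some care is also required in the case $K=\Z$ to justify passing to $V^{\Q}$ and to confirm that simplicity of the lattice forces simplicity of $V^{\Q}$.
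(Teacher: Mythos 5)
Your proof is correct and takes essentially the same route as the paper: both arguments rest on the fact that the irreducible complex constituents of a simple $KG$-module form a single Galois orbit over $K$, combined with the observation that $\nu_2$, being integer-valued, is fixed by every Galois automorphism. The only difference is presentational: the paper invokes Isaacs' Corollary (10.2) uniformly for all $K$ (tacitly reading $K=\Z$ as $\Q$), whereas you verify the single-orbit property case by case --- $\R$ via the Frobenius--Schur type classification, $\Q$ via Schur index theory, and $\Z$ by the (correct) reduction to $V^{\Q}$.
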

\begin{proof}
Let $\chi$ be as in the statement of the lemma. By \cite[Corollary (10.2)]{I76} there exists an automorphism $\sigma \in \Gal(K(\chi)/K)$ such that $\chi = \sigma\chi_s$, where $K(\chi)$ is the extension of $K$ by values of $\chi$. We get
\[
\nu_2(\chi) = \frac{1}{|G|} \sum_{g \in G} \sigma\chi_s(g^2) = \sigma\left(\frac{1}{|G|} \sum_{g \in G} \chi_s(g^2)\right) = \sigma(\nu_2(\chi_s)) = \nu_2(\chi_s),
\]
since $\nu_2(\chi_s)$ is an integer.
\end{proof}

The above lemma justifies the following definition.

\begin{definition}
Let $V$ be a $KG$-module with character $\chi_V$. Let $\chi_s \in \Irr(G)$ be any character such that $(\chi_V,\chi_s) \neq 0$. We say that $V$ is of
\begin{itemize}
	\item real/$\R$ type if $\nu_2(\chi_s) = 1$,
	\item complex/$\CC$ type if $\nu_2(\chi_s) = 0$,
	\item quaternionic/$\HH$ type if $\nu_2(\chi_s) = -1$.
\end{itemize}
\end{definition}

\begin{remark}
\label{remark:complex-quaternionic-case}
Note that an irreducible $\R G$-module $V$ is of $F$-type if and only if we have an isomorphism of $\R$-algebras:
\[
\End_{\R G}(V) \cong F.
\]
Hence by Theorem \ref{theorem:complex-r-module} and Lemma \ref{lemma:fsi} irreducible $KG$-modules of type $\CC$ or $\HH$ are essentially complex.
\end{remark}

Using Lemma \ref{lemma:fsi} again, we state the following definition.

\begin{definition}
Let $V$ be an irreducible $\Q G$-module with character $\chi_V$. Let $\chi_s \in \Irr(G)$ with $(\chi_V, \chi_s) \neq 0$. Define
\[
m(V) := m_{\Q}(\chi_s),
\]
where $m_{\Q}(\chi_s)$ is the Schur index of $\chi_s$ over the rationals.
For an irreducible $G$-lattice $L$ we define 
\[
m(L) := m(L^\Q).
\]
\end{definition}

We immediately get a complex structure criterion on irreducible lattices.

\begin{proposition}
Let $L$ be an irreducible $G$-lattice. The following are equivalent:
\begin{enumerate}
    \item $L$ is essentially complex.
    \item $L^\Q$ is essentially complex.
    \item $L$ is of type $\CC,\HH$ or $m(L)$ is even.
\end{enumerate}
\end{proposition}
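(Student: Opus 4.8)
The plan is to notice that conditions (1) and (2) are the \emph{same} condition once the definitions are unwound, and then to settle (2) $\Leftrightarrow$ (3) by feeding the multiplicities of the absolutely irreducible constituents of $L^{\R}$ into Johnson's criterion, Theorem \ref{theorem:complex-r-module}.

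For (1) $\Leftrightarrow$ (2) I would just unwind the definition of a complex structure. By transitivity of extension of scalars (Remark \ref{remark:extending_field}),
\[
(L^{\Q})^{\R} = \R \otimes_{\Q}(\Q \otimes_{\Z} L) = \R \otimes_{\Z} L = L^{\R},
\]
so a complex structure on $L$ and a complex structure on $L^{\Q}$ both mean an element $J \in \End_{\R G}(L^{\R})$ with $J^2 = -\mathrm{id}$. Hence (1) and (2) are literally equivalent, and only (2) $\Leftrightarrow$ (3) remains.

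Set $W := L^{\R}$ and argue by cases on the type of $L$. As $L$ is irreducible, $L^{\Q}$ is an irreducible $\Q G$-module; fixing $\chi_s \in \Irr(G)$ with $(\chi_{L^{\Q}}, \chi_s) \neq 0$, the character of $L^{\Q}$, and hence of $W$, is $m(L)\sum_{\sigma} \sigma\chi_s$, where $\sigma$ runs over $\Gal(\Q(\chi_s)/\Q)$ and $m(L) = m_{\Q}(\chi_s)$. By Lemma \ref{lemma:fsi} every conjugate $\sigma\chi_s$ has the same Frobenius--Schur indicator as $\chi_s$, so the type is constant along the orbit. If $L$ is of type $\CC$ or $\HH$, then $W$ is essentially complex by Remark \ref{remark:complex-quaternionic-case}, so (2) holds, while (3) holds by the choice of type; there is nothing further to check. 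If $L$ is of type $\R$, each $\sigma\chi_s$ is real-valued and is afforded by an absolutely irreducible real module $U_{\sigma}$ of character $\sigma\chi_s$; comparing characters shows that the $U_{\sigma}$ are exactly the absolutely irreducible components of $W$, each occurring with multiplicity $m(L)$. By the equivalence (1) $\Leftrightarrow$ (3) of Theorem \ref{theorem:complex-r-module}, $W$ is then essentially complex if and only if $m(L)$ is even, which is precisely what condition (3) reduces to in the type $\R$ case.

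I expect the delicate point to be the type $\R$ step: one has to read Johnson's condition (3) correctly, namely that the absolutely irreducible components of $W$ are the summands $U_{\sigma}$ that remain irreducible over $\CC$ (the real-type ones), that $W$ need not be homogeneous but splits over the whole Galois orbit, and that all $U_{\sigma}$ nonetheless occur with the \emph{same} multiplicity $m(L)$, so that the parity condition is uniform. Pinning down that this common multiplicity is the \emph{rational} Schur index $m_{\Q}(\chi_s) = m(L)$, and not a Schur index computed over $\R$, is the part where the character bookkeeping must be done with care.
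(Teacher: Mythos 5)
Your proposal is correct and follows essentially the same route as the paper: the equivalence of (1) and (2) is definitional since $(L^{\Q})^{\R} = L^{\R}$, the $\CC$/$\HH$ cases are dispatched by Remark \ref{remark:complex-quaternionic-case}, and the real-type case rests on the decomposition $\chi_{L^{\Q}} = m(L)\sum_{\sigma}\sigma\chi_s$ over the Galois orbit combined with Johnson's parity criterion (Theorem \ref{theorem:complex-r-module}). The only difference is that you spell out both directions and the character bookkeeping more explicitly than the paper does, which is fine.
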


\begin{proof}
Equivalence of 1 and 2 is by definition. By Remark \ref{remark:complex-quaternionic-case} it is enough to consider the case when $V:=L^\Q$ is of real type. Let $m=m(V)$, $\chi_V$ be the character of $V$, $\chi_s \in \Irr(G)$ be one character such that $(\chi_V,\chi_s) \neq 0$ and $\mathcal G$ be the class of Galois conjugates of $\chi_s$ in $\Gal(\Q(\chi_s)/\Q)$. We get
\begin{equation}
\label{eq:decomposition-m-v}
\chi_V = \sum_{\chi \in \mathcal G} m \chi.
\end{equation}
Since every $\chi \in \mathcal G$ is a character of an absolutely irreducible real representation, by Theorem \ref{theorem:complex-r-module} we get that $m$ is an even number.
\end{proof}

In the spirit of \cite[Theorem 3.3]{Jo90} we can state the following proposition.

\begin{theorem}
A $G$-lattice $L$ is essentially complex if and only if every simple component $V$ of $L^\Q$ of type $\R$ with odd $m(V)$ occurs with even multiplicity.
\end{theorem}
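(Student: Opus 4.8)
The plan is to reduce the statement to Johnson's criterion (Theorem~\ref{theorem:complex-r-module}) applied to the real module $L^\R$. First I would observe that $L^\R = (L^\Q)^\R$, so a complex structure on $L$ is literally the same datum as a complex structure on $L^\Q$, namely an element $J \in \End_{\R G}(L^\R)$ with $J^2 = -id$. Hence $L$ is essentially complex if and only if the $\R G$-module $L^\R$ is, and it suffices to decide when $L^\R$ satisfies condition (3) of Theorem~\ref{theorem:complex-r-module}: every absolutely irreducible constituent occurs with even multiplicity.

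Next I would compute the relevant multiplicities by passing through $\CC$. Write $L^\Q = \bigoplus_i V_i^{a_i}$ as a sum of pairwise non-isomorphic simple $\Q G$-modules $V_i$ with multiplicities $a_i$. For each $i$ choose $\chi_{s,i} \in \Irr(G)$ with $(\chi_{V_i},\chi_{s,i}) \neq 0$ and let $\mathcal G_i$ be its Galois orbit over $\Q$. By the standard description of rational simple modules, $\chi_{V_i} = m(V_i) \sum_{\chi \in \mathcal G_i} \chi$; the orbits $\mathcal G_i$ are pairwise disjoint, and by Lemma~\ref{lemma:fsi} every $\chi \in \mathcal G_i$ has the same type as $V_i$. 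Complexifying $L^\Q$ then shows that each $\chi \in \mathcal G_i$ occurs in $L^\CC$ with multiplicity exactly $a_i\,m(V_i)$.

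The key observation is that the absolutely irreducible real constituents are precisely those coming from characters of type $\R$. A real-type $\chi$ has $m_\R(\chi) = 1$, so its real form $W_\chi$ satisfies $W_\chi \otimes_\R \CC \cong \chi$ and is absolutely irreducible, whereas a complex- or quaternionic-type $\chi$ yields a real irreducible whose complexification is $\chi \oplus \bar\chi$ or $2\chi$, and which is therefore essentially complex but \emph{not} absolutely irreducible (Remark~\ref{remark:complex-quaternionic-case}). Consequently, for a real-type $\chi \in \mathcal G_i$ the multiplicity of $W_\chi$ in $L^\R$ equals the multiplicity of $\chi$ in $L^\CC$, since $\chi$ cannot be a constituent of the complexification of any complex- or quaternionic-type real irreducible. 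Thus that multiplicity is $a_i\,m(V_i)$, and condition (3) of Theorem~\ref{theorem:complex-r-module} becomes: $a_i\,m(V_i)$ is even for every index $i$ with $V_i$ of type $\R$.

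Finally I would unwind this parity condition. For a real-type $V_i$, the integer $a_i\,m(V_i)$ is even if and only if $a_i$ is even or $m(V_i)$ is even; equivalently, whenever $m(V_i)$ is odd we must have $a_i$ even, while the types $\CC$ and $\HH$ impose no constraint at all. This is exactly the assertion that every simple component of $L^\Q$ of type $\R$ with odd $m(V)$ occurs with even multiplicity, which completes the equivalence. The only delicate point, and the step I expect to require the most care, is the bookkeeping in the third paragraph: correctly matching the multiplicity $a_i\,m(V_i)$ of $\chi$ in $L^\CC$ with the multiplicity of the absolutely irreducible real module $W_\chi$ in $L^\R$, which hinges on real type being exactly the absolutely irreducible case.
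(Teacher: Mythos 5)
Your proposal is correct and follows essentially the same route as the paper: both arguments rest on the decomposition $\chi_V = \sum_{\chi \in \mathcal G} m\chi$ of the character of a simple $\Q G$-module into a Schur-index multiple of a Galois orbit (the paper's formula \eqref{eq:decomposition-m-v}, with Lemma \ref{lemma:fsi} guaranteeing a common type across the orbit) combined with condition (3) of Johnson's criterion, Theorem \ref{theorem:complex-r-module}. The paper's version is merely terser --- it reduces to a single homogeneous component $kV$ and reads off that $km$ must be even --- whereas you carry out the full multiplicity bookkeeping through $L^\CC$ explicitly, including the matching of real-type characters with absolutely irreducible real constituents, which the paper leaves implicit.
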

\begin{proof}
It is enough to note that if a simple module $V$ with odd $m=m(V)$ occurs in $L^\Q$ with multiplicity $k$, then $kV$ is isomorphic to a homogeneous component of $L^\Q$ and the formula \eqref{eq:decomposition-m-v} implies that
\[
k \cdot \chi_V = \sum_{\chi \in \mathcal G} km \chi.
\]
Hence, using Theorem \ref{theorem:complex-r-module}, if we want $L$, or equivalently $kV$, to be essentially complex, we need $k$ to be even.
\end{proof}

\section{Complex Vasquez invariant}

In this section, we will prove Theorem \ref{theorem:complex-vasquez}. Similar as in the real case, we state first the definition which follows from the theorem.

\begin{definition}
\label{definition:complex_vasquez_number}
For any finite group $G$ a \emph{complex Vasquez number} $n_\CC(G)$ is the smallest natural number which satisfies the conclusion of Theorem \ref{theorem:complex-vasquez}.
\end{definition}

\begin{proof}[Proof of Theorem \ref{theorem:complex-vasquez}]
Let $\Gamma$ be defined by the short exact sequence \eqref{eq:exact}.
By Theorem \ref{thm:vasquez} there exists a $G$-submodule $N \subset L$, i.e. a normal subgroup $N$ of $\Gamma$, such that $\Gamma/N$ is a Bieberbach group of dimension less than or equal to $n(G)$.

Let 
\[
L^\Q/N^\Q = m_1 V_1\oplus\ldots\oplus m_k V_k
\]
be a decomposition such that $V_1,\ldots,V_k$ are irreducible, pairwise non-iso\-mor\-phic $\Q G$-modules. 
Assume that for some $1 \leq i \leq k$, $m_i V_i$ is not essentially complex. This means that $V_i$ is of real type and that both $m_i'$ and $m(V_i)$ are odd. Since the multiplicity of $V_i$ in $L$ is even, $V_i$ must be a composition factor of $N^\Q$.
This shows that we can find a maximal submodule $M'$ of $N^\Q$ such that both $M'$ and $L^\Q/M'$ are essentially complex. In particular, we get
\begin{equation}
\label{equation:upper_bound}
L^\Q/M' = n_1' V_1\oplus\ldots\oplus n'_k V_k,
\end{equation}
where
\begin{equation}
\label{equation:multiplicity}
n_i' = 
\left\{
\begin{array}{ll}
m_i+1 & \text{if } m_i' V_i \text{ is not essentially complex},\\
m_i   & \text{otherwise}.
\end{array}
\right.
\end{equation}
We conclude that
\begin{equation}
\label{equation:dimensions}
\dim_\Q(L^\Q/M') \leq 2 \dim_\Q(L^\Q/N^\Q) \leq 2n(G).
\end{equation}
Now, let
\[
M:=L\cap M'.
\]
We easily get that $M$ is a $\Z$-pure submodule of $L$ with $\Z$-rank equal to $\dim_\Q(M')$ (see \cite[(16.19)]{CR62}).
Obviously $M \subset L \cap N^\Q = N$ and we have
\[
\Gamma/N\cong {\Gamma/M}{\Big/}{N/M}.
\]
Let $\gamma\in\Gamma\setminus  M$ be such that $\gamma^k\in M$ for some positive integer $k$. Then $\gamma\not\in N$ since $N/M$, as a subgroup of $L/M$, is torsion-free. We get that $\gamma\in \Gamma\setminus N$, but $\gamma^k\in N$, so $\gamma N$ is an element of finite order in $\Gamma/N$, which contradicts our assumptions on $\Gamma/N$. We get that $\Gamma/M$ is torsion-free of real dimension $\rk_\Z(L/M) = \dim_\Q(L^\Q/M')$, hence using inequality \eqref{equation:dimensions} we prove our claim, showing in particular that
\[
n_{\CC}(G)\leq n(G).
\]
\end{proof}

With Theorem \ref{theorem:complex-vasquez} we have got an upper bound for complex Vasquez number. We can in fact show more:

\begin{lemma}
\label{lemma:lower_bound}
Let $G$ be a finite group. Then
\[
n(G)/2 \leq n_{\CC}(G).
\]
\end{lemma}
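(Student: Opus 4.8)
The goal is to prove the lower bound $n(G)/2 \leq n_{\CC}(G)$, which complements the upper bound $n_{\CC}(G) \leq n(G)$ already established. My plan is to start from an arbitrary real Bieberbach group realizing the Vasquez number $n(G)$ and manufacture from it an aspherical K\"ahler group whose complex dimension is controlled by $n_\CC(G)$, thereby transferring the real lower bound on $n(G)$ into a lower bound on $n_\CC(G)$.

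First I would recall the definition of the Vasquez number: $n(G)$ is the smallest integer such that for every faithful $G$-lattice $L$ with special element $\alpha$, there is a $\Z$-pure submodule $N$ with $\rk_\Z(L/N) \leq n(G)$ and $\nu_*(\alpha)$ special. To exploit this, I would take a faithful $G$-lattice $L$ together with a special $\alpha \in H^2(G,L)$ that witnesses the sharpness of $n(G)$, i.e.\ for which the minimal rank of an admissible quotient is exactly $n(G)$ (such a witness exists by the definition of $n(G)$ as the smallest such integer). The key construction is to pass to the doubled lattice $L \oplus L$ with the special element $\alpha \oplus 0$, exactly as in Remark~\ref{remark:holonomy-of-kahler}; this doubled lattice is essentially complex by Theorem~\ref{theorem:complex-r-module} since every absolutely irreducible constituent now occurs with even multiplicity, and so it defines a genuine aspherical K\"ahler group $\Gamma$ of complex dimension $\rk_\Z(L) = n$ with holonomy $G$.

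Next I would apply the complex Vasquez theorem to this $\Gamma$: there is a $\Gamma$-normal submodule $M \subset L \oplus L$ with $\Gamma/M$ an aspherical K\"ahler group of complex dimension at most $n_\CC(G)$, so that $\rk_\Z((L\oplus L)/M) \leq 2 n_\CC(G)$, and the induced class on the quotient is again special. The plan is then to produce from this complex quotient a \emph{real} admissible quotient of $L$ of rank at most $2 n_\CC(G)$. The cleanest route is to observe that the special class surviving on $(L\oplus L)/M$ restricts nontrivially to every subgroup of prime order, and to intersect or project $M$ back to the individual summand so as to extract a $\Z$-pure $G$-submodule $N \subset L$ with $\rk_\Z(L/N) \leq \rk_\Z((L\oplus L)/M) \leq 2 n_\CC(G)$ on which $\alpha$ remains special. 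Since $\alpha$ was chosen to be a witness forcing rank at least $n(G)$ in any admissible quotient of $L$, we obtain $n(G) \leq 2 n_\CC(G)$, which rearranges to the claimed inequality.

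The main obstacle I anticipate is the back-transfer step: showing that a complex-admissible quotient of the doubled lattice yields a real-admissible quotient of the original lattice with the expected rank bound and preserved speciality. The subtlety is that $M$ need not respect the direct sum decomposition $L \oplus L$, so I cannot simply take one coordinate of $M$. I would handle this by using the structure of special classes in terms of restrictions to prime-order subgroups: speciality of the induced class on $(L\oplus L)/M$ guarantees that $\res_H$ is nonzero for each prime-order $H$, and a careful bookkeeping argument---projecting the special class of $L\oplus L$ through the composite $L \hookrightarrow L \oplus L \twoheadrightarrow (L\oplus L)/M$ and verifying that at least one of the two embeddings keeps all restrictions nonzero---recovers a special quotient of $L$ of the required rank. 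Getting this projection argument to simultaneously preserve $\Z$-purity (via Remark~\ref{remark:pure_submodule}) and the rank estimate is the delicate point of the proof.
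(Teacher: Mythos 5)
Your proposal is correct and follows essentially the same route as the paper: enlarge $L$ by a complement carrying the zero cohomology class so that the sum is essentially complex, apply Theorem~\ref{theorem:complex-vasquez} to the resulting aspherical K\"ahler group, and pull the quotient back to $L$ by intersecting (not projecting) $M$ with the first summand, with speciality transferred through the factorization $L \to L/(M\cap L) \to (L\oplus L)/M$ exactly as in the paper's composite-map argument. The only differences are cosmetic: the paper takes a complement $L'$ of \emph{minimal} rank (whose minimality yields $M^\Q \cap L^\Q \neq 0$, a fact not actually needed for the rank estimate) and argues by contradiction for an arbitrary $(L,\alpha)$ rather than via a sharp witness, while your full doubling $L\oplus L$ with class $\alpha + 0$ yields the same bound $\rk_\Z\bigl(L/(M\cap L)\bigr) \leq \rk_\Z\bigl((L\oplus L)/M\bigr) \leq 2n_{\CC}(G)$, and your worry about ``which of the two embeddings'' is moot since only the first copy carries $\alpha$ and its image is automatically special.
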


\begin{proof}
Let $G$ be a finite group for which $n_{\CC}(G) < n(G)/2$. Let $\Gamma$ be a Bieberbach group, defined by the short exact sequence \eqref{eq:exact}. The lattice $L$ does not have to be essentially complex, but -- arguing as in the proof of Theorem \ref{theorem:complex-vasquez} -- there exists a $G$-lattice $L'$ of minimal $\Z$-rank such that $L \oplus L'$ admits a complex structure. In particular we have $\rk_\Z L' \leq \rk_\Z L$. Moreover, if $\alpha \in H^2(G,L)$ defines the group $\Gamma$, then 
\[
\alpha + 0 \in H^2(G,L) \oplus H^2(G,L')
\]
defines an aspherical K\"ahler group $\Gamma'$. By our assumption there exists an essentially complex $G$-submodule $M \subset L \oplus L'$, such that $\Gamma'/M$ is torsion-free of real dimension 
\[
\rk_\Z (L \oplus L')/M \leq 2n_{\CC}(G) < n(G).
\]
The minimality of $L'$ implies that it is not an essentially complex module, hence $M^\Q\cap L^\Q \neq 0$ and in particular $M \cap L \neq 0$.
Now we can argue as in the proof of Theorem \ref{thm:vasquez}. By Remark \ref{remark:pure_submodule} $(M \cap L) \oplus (M \cap L')$ is a $G$-sublattice and a pure $\Z$-submodule of $L \oplus L'$. Using a composition of maps
\[
L \oplus L' \to L/(M \cap L) \oplus L'/(M \cap L') \to (L \oplus L')/M
\]
we get that the image of $\alpha$ in $H^2(G,L/(M \cap L))$ is special and hence $\Gamma/(M \cap L)$ is a Bieberbach group of dimension 
\[
\rk_\Z L/(M \cap L) \leq \rk_\Z (L \oplus L')/N < n(G),
\]
which contradicts the minimality of Vasquez number (see Definition \ref{definition:vasquez_number}).
\end{proof}

In some cases, the following lemma can give us a better estimate of complex Vasquez invariant than the proof of Theorem \ref{theorem:complex-vasquez}.

\begin{lemma}
Let $G$ be a finite group and $\Irr_1(G) := \{ x \in \Irr(G) : \nu_2(\chi)=1 \}$. Then
\[
n_{\CC}(G) \leq \frac{1}{2} \left( n(G) + \sum_{\chi \in \Irr_1(G)} m_\Q(\chi)\chi(1) \right).
\]
\end{lemma}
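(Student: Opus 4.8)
The plan is to re-examine the dimension count inside the proof of Theorem \ref{theorem:complex-vasquez}, replacing the crude doubling estimate \eqref{equation:dimensions} by an exact accounting of how much dimension the passage from $N^\Q$ to $M'$ actually costs. Keeping the notation of that proof, write $L^\Q/N^\Q = m_1 V_1 \oplus \cdots \oplus m_k V_k$ with the $V_i$ pairwise non-isomorphic irreducible $\Q G$-modules, and call an index $i$ \emph{bad} when $m_i V_i$ fails to be essentially complex, i.e. $V_i$ is of real type with $m(V_i)$ odd and $m_i$ odd. The module $M'$ is obtained from $N^\Q$ by deleting, for each bad $i$, exactly one copy of $V_i$, so that the multiplicity of $V_i$ in $L^\Q/M'$ rises from $m_i$ to $m_i+1$ as recorded in \eqref{equation:multiplicity}. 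Hence, instead of \eqref{equation:dimensions}, I would use the sharper identity
\[
\dim_\Q(L^\Q/M') = \dim_\Q(L^\Q/N^\Q) + \sum_{i \text{ bad}} \dim_\Q V_i .
\]
The first term equals $\rk_\Z(L/N) \le n(G)$, so everything reduces to bounding the ``padding'' term $\sum_{i \text{ bad}} \dim_\Q V_i$.

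The bad $V_i$ are pairwise distinct irreducible $\Q G$-modules of real type, so their total dimension is at most $\sum_{V}\dim_\Q V$ taken over \emph{all} irreducible $\Q G$-modules $V$ of real type (dropping the odd-Schur-index restriction only enlarges the bound). I would then translate this quantity into complex characters via \eqref{eq:decomposition-m-v}: for such a $V$ with constituent $\chi_s$ and Galois orbit $\mathcal G$ one has $\chi_V = m(V)\sum_{\chi \in \mathcal G}\chi$, and since both $\chi(1)$ and the Schur index $m_\Q(\chi) = m(V)$ are constant along the orbit, $\dim_\Q V = \sum_{\chi \in \mathcal G} m_\Q(\chi)\chi(1)$. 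By Lemma \ref{lemma:fsi} every $\chi \in \mathcal G$ satisfies $\nu_2(\chi)=1$, and conversely every $\chi \in \Irr_1(G)$ lies in exactly one such orbit; thus the orbits of the real-type $\Q G$-modules partition $\Irr_1(G)$, giving
\[
\sum_{i \text{ bad}} \dim_\Q V_i \;\le\; \sum_{V \text{ real type}} \dim_\Q V \;=\; \sum_{\chi \in \Irr_1(G)} m_\Q(\chi)\chi(1).
\]
Combining this with the identity above yields $\dim_\Q(L^\Q/M') \le n(G) + \sum_{\chi \in \Irr_1(G)} m_\Q(\chi)\chi(1)$. Finally, exactly as in Theorem \ref{theorem:complex-vasquez}, setting $M := L \cap M'$ gives an aspherical K\"ahler group $\Gamma/M$ whose \emph{complex} dimension is $\tfrac12 \rk_\Z(L/M) = \tfrac12 \dim_\Q(L^\Q/M')$, and halving produces the claimed bound on $n_\CC(G)$.

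I expect the only delicate point to be the bookkeeping in the character translation: one must invoke the Galois-invariance of the Schur index and of the Frobenius--Schur indicator (the latter supplied by Lemma \ref{lemma:fsi}) to rewrite each $\dim_\Q V$ as a partial sum of the $m_\Q(\chi)\chi(1)$, and then verify that these partial sums, over all real-type irreducible $\Q G$-modules, reassemble precisely into the sum over $\Irr_1(G)$. Everything else is a direct sharpening of the estimate already carried out for Theorem \ref{theorem:complex-vasquez}, the essential gain being that the padding is controlled by a fixed group-theoretic quantity rather than by the size of $L^\Q/N^\Q$ itself.
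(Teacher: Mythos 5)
Your proposal is correct and takes essentially the same route as the paper: its proof likewise reads the bound $2n_{\CC}(G) \leq n(G) + \sum_{V} \dim_\Q V$ (sum over real-type irreducible $\Q G$-modules) directly off equations \eqref{equation:upper_bound} and \eqref{equation:multiplicity}, and then performs the same Galois-orbit translation --- each $\chi \in \Irr_1(G)$ lying in exactly one rational irreducible --- to rewrite that sum as $\sum_{\chi \in \Irr_1(G)} m_\Q(\chi)\chi(1)$. Your version simply makes explicit the ``delete one copy of each bad $V_i$'' bookkeeping that the paper leaves implicit in those two equations.
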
 

\begin{proof}
Let $\mathcal L$ be the set of representatives of isomorphism classes of irreducible $\Q G$-modules of real type. Equations \eqref{equation:upper_bound} and \eqref{equation:multiplicity} show that
\[
2n_{\CC}(G) \leq n(G) + \sum_{L \in \mathcal L} \dim_\Q L.
\]
Since every simple $\CC G$-module can be a component of only one of the modules $L^\CC$, for $L \in \mathcal L$, we get that
\[
\sum_{L \in \mathcal L} \dim_\Q L = \sum_{\chi \in \Irr_1(G)} m_\Q(\chi)\chi(1).
\]
\end{proof}

To sum up, we have the following bounds on complex Vasquez invariant.

\begin{proposition}
\label{proposition:bounds}
Let $G$ be a finite group. Then
\begin{enumerate}
    \item $n(G)/2 \leq n_{\CC}(G)$.
    \item $n_{\CC}(G) \leq n(G)$.
    \item $\displaystyle n_{\CC}(G) \leq \frac{1}{2} \left( n(G) + \sum_{\chi \in \Irr_1(G)} m_\Q(\chi)\chi(1) \right)$. \label{proposition:bounds:schur}
\end{enumerate}
\end{proposition}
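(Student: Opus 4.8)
The plan is simply to assemble the three inequalities from results already established, since each of the bounds has in effect been proved in the preceding material; the proof is a consolidation rather than a new argument. First I would record that bound~(1) is precisely the content of Lemma~\ref{lemma:lower_bound}, which gives $n(G)/2 \leq n_\CC(G)$, so nothing further is needed there.

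For bound~(2), I would extract the inequality $n_\CC(G) \leq n(G)$ that appears at the very end of the proof of Theorem~\ref{theorem:complex-vasquez}. There the construction produces, from a $G$-submodule $N \subset L$ realizing the real Vasquez bound, an essentially complex quotient $\Gamma/M$ of real dimension at most $2n(G)$, i.e. of complex dimension at most $n(G)$. By the definition of $n_\CC(G)$ (Definition~\ref{definition:complex_vasquez_number}) as the smallest number satisfying the conclusion of the theorem, this yields $n_\CC(G) \leq n(G)$ directly, so part~(2) requires only that I isolate this inequality as a standalone statement.

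Bound~(3) is exactly the assertion of the lemma immediately preceding this proposition, whose proof combines equations~\eqref{equation:upper_bound} and~\eqref{equation:multiplicity} with the observation that each absolutely irreducible complex character of real Frobenius--Schur type contributes to exactly one rational simple module of real type; I would simply invoke that lemma. Thus the whole proof reduces to a three-line citation of Lemma~\ref{lemma:lower_bound}, the final inequality in the proof of Theorem~\ref{theorem:complex-vasquez}, and the preceding lemma.

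I do not expect any genuine obstacle here, precisely because all three statements concern the same invariant and are each proved independently in the text. The only point worth a moment's care is bookkeeping: bound~(2) is never stated as a named lemma but is buried inside the proof of the main theorem, so I would want to make sure the inequality is quoted in the form actually derived there. Beyond that the three bounds are automatically mutually consistent, since they merely sandwich $n_\CC(G)$ between $n(G)/2$ and $n(G)$ (with~(3) offering a possibly sharper upper estimate), and no comparison between them is asserted or required.
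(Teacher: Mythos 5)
Your proposal is correct and matches the paper exactly: the proposition is stated there as a summary (``To sum up\ldots'') with no separate proof, precisely because part (1) is Lemma \ref{lemma:lower_bound}, part (2) is the inequality $n_\CC(G) \leq n(G)$ established at the end of the proof of Theorem \ref{theorem:complex-vasquez}, and part (3) is the lemma immediately preceding the proposition. Your only addition --- noting that (2) is buried inside the main theorem's proof rather than stated as a named result --- is accurate bookkeeping, not a deviation.
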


\begin{corollary}
\label{corollary:odd_order}
Let $G$ be a group of odd order. Then 
\[
n_{\CC}(G) = \lfloor( n(G) + 1 )/2\rfloor.
\]
\end{corollary}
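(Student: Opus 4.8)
The plan is to pin down $n_{\CC}(G)$ by squeezing it between the lower bound of Proposition \ref{proposition:bounds}(1) and the Schur-index upper bound of Proposition \ref{proposition:bounds}(\ref{proposition:bounds:schur}); both become sharp for groups of odd order once $\Irr_1(G)$ is under control.

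The key character-theoretic input I would establish first is that for a group $G$ of odd order the only irreducible character of real type is the trivial one, i.e. $\Irr_1(G) = \{\mathbf 1\}$. Indeed, $\nu_2(\chi) = 1$ forces $\chi$ to be real-valued; by the classical Frobenius--Schur count the number of real-valued irreducible characters equals the number of real conjugacy classes (those stable under $g \mapsto g^{-1}$), and a theorem of Burnside states that a group of odd order has a single real class, namely $\{1\}$. Hence $\mathbf 1$ is the only real-valued irreducible character, and since $m_\Q(\mathbf 1) = \mathbf 1(1) = 1$ the sum appearing in Proposition \ref{proposition:bounds}(\ref{proposition:bounds:schur}) collapses to $1$.

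With this in hand, Proposition \ref{proposition:bounds}(\ref{proposition:bounds:schur}) yields $n_{\CC}(G) \leq \tfrac12\bigl(n(G)+1\bigr)$, and because $n_{\CC}(G) \in \N$ this sharpens to $n_{\CC}(G) \leq \lfloor (n(G)+1)/2 \rfloor$. In the other direction, Proposition \ref{proposition:bounds}(1) gives $n_{\CC}(G) \geq n(G)/2$, and integrality upgrades this to $n_{\CC}(G) \geq \lceil n(G)/2 \rceil$. Finally I would invoke the elementary identity $\lceil n/2 \rceil = \lfloor (n+1)/2 \rfloor$, valid for every integer $n$ (checking the even and odd cases separately), to match the two bounds and conclude equality.

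The only genuine obstacle is the character-theoretic claim $\Irr_1(G) = \{\mathbf 1\}$; everything after it is bookkeeping with floors and ceilings. As this fact is standard --- it is precisely the statement that a group of odd order has the trivial character as its unique real irreducible character --- the argument is short, with the substantive work already carried out in establishing the three bounds of Proposition \ref{proposition:bounds}.
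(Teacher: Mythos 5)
Your proof is correct and follows exactly the paper's route: the paper's own argument likewise observes that for odd order $\Irr_1(G)$ reduces to the trivial character and then combines the lower and upper bounds of Proposition \ref{proposition:bounds}. You merely make explicit what the paper leaves implicit --- the Burnside/Frobenius--Schur justification of $\Irr_1(G)=\{\mathbf 1\}$ and the floor/ceiling bookkeeping --- which is fine.
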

\begin{proof}
It is enough to note that in the case of odd order group $G$, the set $\Irr_1(G)$ consists only of the trivial character and then use lower and upper bounds of Proposition \ref{proposition:bounds}.
\end{proof}

\begin{example}
Let $G=C_3^k$ be an elementary abelian $3$-group of rank $k$. By \cite[Corollary on page 126]{CW89} $n(G) = 3^{k-1}(3^k-1)/2$ and by Corollary \ref{corollary:odd_order} we get that
\[
n_{\CC}(C_3^k) =
\left\{
\begin{array}{lll}
n(C_3^k)/2 && \text{if $k$ is even}, \\
(n(C_3^k)+1)/2 && \text{if $k$ is odd}.
\end{array}
\right.
\]
\end{example}

By the above example we get, that the lower bound for the complex Vasquez invariant is sharp. The following one shows that one of the upper bound also has this property.

\begin{proposition}
Let $G$ be an elementary abelian $2$-group of rank $k \geq 2$. Then
\[
n_\CC(G) = \frac{1}{2}\left(n(G)+\sum_{\chi \in \Irr_1(G)}m_\Q(\chi)\chi(1)\right) = 2^{k-1}+2^{k-2}(2^k-1).
\]
\end{proposition}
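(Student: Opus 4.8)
The plan is to prove both equalities: that the displayed number equals the Schur-index upper bound of Proposition~\ref{proposition:bounds}(\ref{proposition:bounds:schur}), and that this bound is attained. The first part is a direct computation. Since $G=C_2^k$ is abelian of exponent $2$, every $\chi\in\Irr(G)$ is one-dimensional with values in $\{\pm1\}$; thus $\chi(1)=1$, $\nu_2(\chi)=\frac1{|G|}\sum_{g}\chi(g^2)=\chi(1)=1$, and $\chi$ is realizable over $\Q$, so $m_\Q(\chi)=1$. Hence $\Irr_1(G)=\Irr(G)$, $|\Irr(G)|=2^k$, and $\sum_{\chi\in\Irr_1(G)}m_\Q(\chi)\chi(1)=2^k$. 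The subgroups of order $2$ are the $2^k-1$ subgroups $\langle g\rangle$ with $g\neq 1$, each of index $2^{k-1}$, so the Cliff--Weiss formula \cite{CW89} gives $n(G)=\sum_H[G:H]=2^{k-1}(2^k-1)$. Substituting into Proposition~\ref{proposition:bounds}(\ref{proposition:bounds:schur}) yields $n_\CC(G)\le\frac12\bigl(2^{k-1}(2^k-1)+2^k\bigr)=2^{k-1}+2^{k-2}(2^k-1)$, the asserted value.

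For the reverse inequality it suffices to produce one aspherical K\"ahler group with holonomy $G$ whose minimal essentially complex reduction already has complex dimension $2^{k-1}+2^{k-2}(2^k-1)$. Let $L$ be the minimal special quotient of an extremal Bieberbach group witnessing the Cliff--Weiss lower bound $n(G)=\sum_H[G:H]$; then $\rk_\Z L=n(G)$ and $L$ admits no proper special quotient, while the embedding in the modified proof of Theorem~\ref{thm:vasquez} identifies $L^\Q\cong\bigoplus_H\Q[G/H]$. In this module the trivial character occurs with multiplicity $2^k-1$ and each nontrivial character with multiplicity $2^{k-1}-1$, all odd since $k\ge2$ (so, in particular, each irreducible occurs). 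Every simple constituent is of real type with Schur index $1$, so by the essential-complexity criterion (the last theorem of Section~3) repairing the parity requires adjoining one copy of each irreducible; accordingly I set $L':=\bigoplus_{\chi\in\Irr(G)}V_\chi$, the sum of the $2^k$ pairwise non-isomorphic rational irreducible $G$-modules, of rank $2^k$. Equipping $L\oplus L'$ with the special class $\alpha+0$, where $\alpha$ defines $L$, gives a faithful, essentially complex lattice defining an aspherical K\"ahler group of complex dimension $\frac12(n(G)+2^k)$, the target.

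It remains to verify that this group has no strictly smaller essentially complex reduction. Let $M\subseteq L\oplus L'$ be a $\Z$-pure $G$-submodule with $(L\oplus L')/M$ essentially complex and the image of $\alpha+0$ special. Composing the quotient maps as in the proof of Lemma~\ref{lemma:lower_bound}, and using that $\alpha$ is supported on $L$, one sees that $L/(M\cap L)$ is a special quotient of $L$; since $L$ has no proper special quotient, $M\cap L=0$. Hence the projection onto $L'$ embeds $M$, so $M^\Q$ is the graph of a $G$-homomorphism defined on a sub-sum $\bigoplus_{\chi\in S}V_\chi$ of the multiplicity-free module $(L')^\Q$. Working in the Grothendieck group, the multiplicity of $\chi$ in the quotient equals $a_\chi$ when $\chi\in S$ and $a_\chi+1$ otherwise, where $a_\chi$ is the (odd) multiplicity of $\chi$ in $L^\Q$; essential complexity forces every such number to be even, which is possible only when $S=\varnothing$, i.e. $M=0$. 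Therefore $L\oplus L'$ is its own unique essentially complex special quotient, giving $n_\CC(G)\ge\frac12(n(G)+2^k)$ and hence equality.

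I expect the main obstacle to lie in the lower bound, and within it in the exact description of $L$: one needs not merely that $n(G)$ is attained but that the minimal special quotient has all its rational-irreducible multiplicities odd, since it is precisely this parity that the essential-complexity criterion converts into the sharp surplus $2^k$. Once the character $L^\Q\cong\bigoplus_H\Q[G/H]$ is pinned down from the Cliff--Weiss extremal construction, the graph-and-parity computation of the previous paragraph is essentially forced.
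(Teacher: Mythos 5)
Your proof is correct and follows essentially the same route as the paper: the upper bound is the same application of Proposition \ref{proposition:bounds}.\ref{proposition:bounds:schur}, and your extremal lattice $L$ (no proper special quotient, $L^\Q\cong\bigoplus_H\Q[G/H]$, all multiplicities $2^k-1$ and $2^{k-1}-1$ odd) is exactly the module $S=\bigoplus_{a}\ind_{\langle a\rangle}^G\Z$ that the paper takes directly from \cite[Theorem 2]{CW89}, while your complement $\bigoplus_{\chi}V_\chi$ is rationally the paper's regular lattice $R$. The only cosmetic difference is the order of the two obstructions in ruling out a nonzero reduction $M$: the paper first uses essential complexity of $M$ to force $M\cap S\neq 0$ and then contradicts speciality, whereas you first use speciality to force $M\cap L=0$ and then conclude by the graph-and-parity count.
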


\begin{proof}
Let $\X$ denote the set of non-trivial elements of $G$. 
Let 
\[
S=\bigoplus_{a \in \X}\ind_{\langle a \rangle}^G\Z.
\]
By \cite[Theorem 2]{CW89} there exists special element $\alpha \in H^2(G,S)$ with the property that 
\begin{equation}
\label{equation:not-special}
\nu_*(\alpha) \text{ is not special}
\end{equation}
for any non-trivial submodule $M$ of $S$, where $\nu \colon S \to S/M$ is the natural mapping. In particular
\[
n(G) = \rk_\Z(S) = 2^{k-1}(2^k-1).
\]

For every $a \in \X$ let $\chi_a$ denote the trivial character of the group $\langle a \rangle$. The character $\chi_S$ of the $G$-module $S$ is given by the formula
\[
\chi_S = \sum_{a \in \X} \ind_{\langle a \rangle}^G \chi_a.
\]

Let $\chi_G$ be the trivial character of $G$. If by $\cal K$ we denote the set of subgroups of $G$ of index $2$, then
\[
\Irr(G) = \Irr_1(G) = \{\chi_G\} \cup \{ \chi_K : K \in \cal K \},
\]
where $\chi_K$ is the irreducible character of $G$ with kernel $K \in \cal K$.

By the Frobenius reciprocity, we get
\[
(\chi_G, \chi_S) = \sum_{a \in \X} (\chi_G,\ind_{\langle a \rangle}^G \chi_a) = \sum_{a \in \X} (\res_{\langle a \rangle}\chi_G,\chi_a) = |\X| = 2^k-1
\]
and for $K \in \cal K$
\[
(\chi_K, \chi_S) = \sum_{a \in \X} (\chi_K,\ind_{\langle a \rangle}^G \chi_a) = \sum_{a \in \X} (\res_{\langle a \rangle}\chi_K,\chi_a) = |K|-1=2^{k-1}-1.
\]
The above calculations show that
\[
\chi_S = (2^k-1)\chi_G + \sum_{K \in \cal K} (2^{k-1}-1) \chi_K,
\]
hence $S$ does not admit any complex structure, but if $R$ is the regular $G$-module, then
\[
L := S \oplus R,
\]
with character
\[
\chi_L = 2^k\chi_G + \sum_{K \in \cal K} 2^{k-1} \chi_K
\]
is an essentially complex $G$-lattice. The cohomology class
\[
\alpha = \alpha + 0 \in H^2(G,S) \oplus H^2(G,R)
\]
is of course special.

Assume that $M \neq 0$ is such a $G$-submodule of $L$ that $L/M$ is essentially complex and the image of $\alpha$ in $H^2(G,L/M)$ is special. Obviously, $M$ is essentially complex itself. Arguing as in the proof of Lemma \ref{lemma:lower_bound}, we get that
\[
M \cap S \neq 0
\]
is a $\Z$-pure submodule of $S$ and the projection of $\alpha$ to $H^2(G,S/M \cap S)$ gives a special element, which contradicts property \eqref{equation:not-special}.

To sum up, $L$ and $\alpha \in H^2(G,L)$ define an aspherical K\"ahler group $\Gamma'$, of dimension $2^{k-1}+2^{k-2}(2^k-1)$, such that for every non-trivial and normal subgroup $M$ of $\Gamma'$ with the property that $M \subset L$, $\Gamma'/M$ is not an aspherical K\"ahler group. This shows that
\[
n_\CC(G) \geq 2^{k-1}+2^{k-2}(2^k-1) = \frac{1}{2}\left(n(G)+\sum_{\chi \in \Irr_1(G)}m_\Q(\chi)\chi(1)\right).
\]
Applying Proposition \ref{proposition:bounds}.\ref{proposition:bounds:schur} finishes the proof.
\end{proof}

\section{Holomorphic maps}

From the algebraic point of view what we have got so far is an epimorphism of one fundamental group to another. This gives us a \emph{continuous} map between complex manifolds. In this section, we will investigate the situation in which the induced map is in fact holomorphic.

Let us start, as usual, with $G$-lattice $L$ and its $\Z$-pure sublattice $M$. The natural homomorphism induces an $\R G$-epimorphism $f \colon M^\R \to (M/L)^\R$, given by
\[
(1\otimes l) \mapsto 1\otimes (l+M)
\]
for $l \in L$. Note that we skip the subscript $\Z$ in the notation of the tensor product.

\begin{lemma}
$\ker f = M^\R$.
\end{lemma}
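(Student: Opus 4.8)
The plan is to deduce the statement from the flatness of $\R$ as a $\Z$-module, applied to the obvious short exact sequence. Since $M$ is a $G$-submodule of $L$, the natural projection $\nu \colon L \to L/M$ is a homomorphism of $G$-lattices, and because $M$ is $\Z$-pure the quotient $L/M$ is again a lattice; the map $f$ described just before the lemma is nothing but $\nu \otimes \mathrm{id}$, the extension of scalars of $\nu$. These data fit into the short exact sequence of $\Z G$-modules
\[
0 \longrightarrow M \longrightarrow L \stackrel{\nu}{\longrightarrow} L/M \longrightarrow 0.
\]

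First I would note that $\R$ is a flat $\Z$-module: being torsion-free over the principal ideal domain $\Z$, it is flat, so the functor $\R \otimes_\Z -$ is exact. Applying it to the sequence above yields the exact sequence of $\R G$-modules
\[
0 \longrightarrow M^\R \longrightarrow L^\R \stackrel{f}{\longrightarrow} (L/M)^\R \longrightarrow 0.
\]
Exactness at $L^\R$ says precisely that $\ker f$ is the image of $M^\R$, and exactness at $M^\R$ (i.e. injectivity of $M^\R \to L^\R$, again from flatness) identifies that image with $M^\R$ itself, giving $\ker f = M^\R$.

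Alternatively, and avoiding homological language, I would use $\Z$-purity directly: choose a $\Z$-basis $e_1, \ldots, e_n$ of $L$ whose first $m$ vectors form a basis of $M$, so that the classes $e_{m+1}+M, \ldots, e_n+M$ form a basis of $L/M$; this is possible exactly because $M$ is $\Z$-pure in $L$. These same vectors are an $\R$-basis of $L^\R$, and $f$ kills $e_1, \ldots, e_m$ while sending $e_{m+1}, \ldots, e_n$ to a basis of $(L/M)^\R$. Hence $\ker f$ is the real span of $e_1, \ldots, e_m$, which is $M^\R$.

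There is essentially no serious obstacle here; the only point requiring a moment's care is the tacit identification of $M^\R$ with an $\R G$-submodule of $L^\R$, that is, the injectivity of the induced map $M^\R \to L^\R$. Both arguments supply this — flatness in the first, the compatible basis in the second — and the $G$-equivariance of everything in sight is inherited from that of $\nu$.
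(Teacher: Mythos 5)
Your proof is correct, but it takes a different route from the paper's. The paper argues in two elementary steps: it checks on generators that $f(1 \otimes m) = 1 \otimes (m+M) = 0$ for $m \in M$, giving the inclusion $M^\R \subseteq \ker f$, and then concludes by a dimension count --- since $f$ is surjective and $\dim_\R M^\R + \dim_\R (L/M)^\R = \dim_\R L^\R$, the inclusion must be an equality. Your first argument instead gets the whole statement at once from exactness of the functor $\R \otimes_\Z -$ applied to $0 \to M \to L \to L/M \to 0$; this is more structural, and it has the merit of making explicit the injectivity of $M^\R \to L^\R$, which the paper's proof (and its dimension count) tacitly assumes when it identifies $M^\R$ with a subspace of $L^\R$. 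Note also that flatness does not even use the $\Z$-purity of $M$, whereas your second, basis-adapted argument uses purity essentially: for a finitely generated submodule of a free $\Z$-module, purity is exactly what guarantees $M$ is a direct summand, hence the existence of the compatible basis (by Smith normal form one only gets a basis $e_1,\ldots,e_n$ of $L$ with $d_1e_1,\ldots,d_me_m$ spanning $M$ in general). The paper's argument buys brevity; yours buys the injectivity point and, in the flat version, slightly greater generality. One incidental observation: the paper's setup contains a typo, writing $f \colon M^\R \to (M/L)^\R$ where $f \colon L^\R \to (L/M)^\R$ is meant, and your reading silently corrects this, consistently with the displayed formula for $f$.
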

\begin{proof}
Take $m \in M$. Since
\[
f(1 \otimes m) = 1 \otimes (m+M) = 0,
\]
hence $M^\R \subset \ker f$. We go to the conclusion, noting that
\[
\dim_\R M^\R +\dim_\R (L/M)^\R = \dim_\R L^\R.
\]
\end{proof}

Assume that $L$ and $L/M$ are essentially complex with complex structures $J$ and $J'$ respectively. 
\begin{definition}
The map $f$ is \emph{holomorphic} if 
\[
fJ=J'f.
\]
\end{definition}

\begin{lemma} Assume that $f$ is holomorphic. Then:
\label{lemma:properties-of-holomorphic-map}
\begin{enumerate}
	\item Kernel of $f$ is $J$-invariant.
	\item $J'$ is uniquely determined by $J$.
\end{enumerate}
\end{lemma}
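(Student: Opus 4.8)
The plan is to exploit the defining relation $fJ = J'f$ together with the surjectivity of $f$ (recorded in the setup) and the identification $\ker f = M^\R$ from the previous lemma. Both parts follow formally from these ingredients, with essentially no computation.

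For the first part, I would argue directly from holomorphicity. Take $x \in \ker f$. Applying the relation gives $f(Jx) = J'f(x) = J'(0) = 0$, so $Jx \in \ker f$. This shows that $\ker f$ is $J$-invariant, and nothing further is required.

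For the second part, the idea is that $J'$ can be reconstructed from $J$ through $f$. Since $f$ is an epimorphism, every $y \in (L/M)^\R$ has a preimage $x \in L^\R$ with $f(x) = y$, and then holomorphicity yields $J'y = J'f(x) = f(Jx)$. Thus $J'$ is governed by the formula $J'y = f(Jx)$, whose right-hand side involves only $J$ and $f$ and not $J'$ itself. The only point requiring care — and the main (if minor) obstacle — is the well-definedness of this formula, namely that $f(Jx)$ must not depend on the chosen preimage $x$ of $y$. This is exactly where the first part enters: if $f(x) = f(x')$, then $x - x' \in \ker f$, which is $J$-invariant, so $J(x - x') \in \ker f$ and hence $f(Jx) = f(Jx')$. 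Consequently the formula determines $J'$ unambiguously in terms of $J$, which establishes the asserted uniqueness.
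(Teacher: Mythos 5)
Your proof is correct and follows essentially the same route as the paper: part 1 is the identical one-line computation $fJ(v)=J'f(v)=0$, and part 2 recovers $J'$ via $J'(w)=fJ(v)$ for any preimage $v$ of $w$, exactly as in the paper. Your extra well-definedness check is harmless but not strictly needed for \emph{uniqueness}, since $J'$ is assumed to exist and the formula pins down its value on every $w$; that check really belongs to the existence statement handled in Corollary \ref{corollary:holomorphic-iff-condition}.
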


\begin{proof}
Let $v \in \ker f$. Then
\[
fJ(v) = J'f(v) = J' 0 = 0
\]
and $J(v) \in \ker f$. Now take $w \in (L/M)^\R$ and $v \in L^\R$ such that $f(v) = w$. We have
\[
J'(w) = J'f(v) = fJ(v).
\]
\end{proof}

\begin{corollary}
\label{corollary:holomorphic-iff-condition}
The map $f$ is holomorphic if and only if $\ker f$ is $J$-invariant.
\end{corollary}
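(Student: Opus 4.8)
The plan is to split the equivalence and notice that one half is already available. For the forward implication, if $f$ is holomorphic then Lemma \ref{lemma:properties-of-holomorphic-map}(1) immediately yields that $\ker f$ is $J$-invariant, so no new argument is needed there. All the actual work lies in the converse: assuming $\ker f = M^\R$ is $J$-invariant, I must produce the complex structure $J'$ on $(L/M)^\R$ for which $fJ = J'f$.

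For the converse I would let $J$ descend through the surjection $f$. Since $f$ is onto with kernel $M^\R$, it identifies $(L/M)^\R$ with the quotient $L^\R / \ker f$, and the computation in the proof of Lemma \ref{lemma:properties-of-holomorphic-map}(2) shows that the only possible candidate is $J'(w) := fJ(v)$, where $v \in L^\R$ is any element with $f(v) = w$. The whole point of the corollary is then to verify that this recipe actually defines a complex structure, and the single place where the hypothesis is consumed is well-definedness: if $f(v_1) = f(v_2)$ then $v_1 - v_2 \in \ker f$, so $J$-invariance gives $J(v_1 - v_2) \in \ker f$ and hence $fJ(v_1) = fJ(v_2)$. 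This is the crux of the argument.

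Everything after well-definedness I expect to be routine. The map $J'$ is $\R G$-linear because $f$ and $J$ are, taking $g\cdot v$ as a preimage of $g\cdot w$ (legitimate since $f$ is $\R G$-linear); and $(J')^2 = -\mathrm{id}$ follows from $J^2 = -\mathrm{id}$ by choosing $J(v)$ as a preimage of $J'(w) = fJ(v)$ and invoking surjectivity of $f$. Thus $J'$ is a genuine complex structure on $(L/M)^\R$, and by construction $fJ = J'f$, so $f$ is holomorphic; Lemma \ref{lemma:properties-of-holomorphic-map}(2) further guarantees this $J'$ is the unique such structure.

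I do not anticipate a serious obstacle: the statement is the standard principle that a linear map descends to a quotient precisely when its kernel is invariant, transported to the setting of complex structures, with $G$-equivariance carried along for free. The only subtlety worth flagging is interpretive rather than technical, namely that holomorphicity of $f$ must be understood relative to the induced $J'$ rather than an arbitrary pre-selected one; the uniqueness in Lemma \ref{lemma:properties-of-holomorphic-map}(2) is exactly what makes the two readings coincide and the statement unambiguous.
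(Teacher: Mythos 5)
Your proposal is correct and follows the paper's argument exactly: the forward direction is Lemma \ref{lemma:properties-of-holomorphic-map}(1), and the converse constructs $J'$ by factoring $fJ$ through the quotient (what the paper calls ``the isomorphism theorem'') and then verifies $J'^2 = -\mathrm{id}$ by the same computation. You merely spell out the well-definedness and $\R G$-linearity checks that the paper leaves implicit, and your interpretive remark about $J'$ being induced rather than pre-selected matches the paper's intended reading.
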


\begin{proof}
Existence of $J'$ is given by the isomorphism theorem. Moreover, for $w = f(v) \in (L/M)^\R$ we have
\[
J'^2(w)= J'^2f(v)= J'fJ(v) fJ^2(v) = f(-v) = -f(v) = -w.
\]
\end{proof}

The following example shows that the algebraic construction given by Theorem \ref{theorem:complex-vasquez} does not give us holomorphic maps in general. We deal with this problem in the next section.

\begin{example}
Let the Bieberbach group $\Gamma\subset\operatorname{Iso}(\R^6)$ be generated by
\[
(I,e_1),\ldots,(I,e_6),\left(-I_2\oplus I_4,\frac12e_6\right).
\]
$\Gamma$ fits into the short exact sequence
\[
0 \to \Z^6 \to \Gamma \to C_2 \to 1.
\]
Since the holonomy representation of $\Gamma$ splits into two homogeneous components, every complex structure on $\Gamma$ will be a direct sum of the two ones, corresponding to the splitting. Let us focus on the four-dimensional part of the representation, since the other one is quotient out by the construction.
Using the notation of the proof of Theorem \ref{thm:vasquez}, we have $L = \Z^4$ with the trivial $G$-action, $L_0 = \{ (0,0,0,d)^T : d \in \Z \}$ and $L_0' = \{ (a,b,c,0)^T : a,b,c \in \Z \}$. Assume that $L$ has the following complex structure:
\[
J = \begin{bmatrix}
-1&-\frac{\sqrt3}{3}&-1&0\\
3+\sqrt3&0&0&\sqrt3\\
1-\sqrt3&\frac{\sqrt3}{3}&1&-1\\
1+\sqrt3&1&1+\sqrt3&0
\end{bmatrix}.
\]
Our goal is to find a rank $2$ submodule $M$ of $L_0'$ such that $M^\R$ is $J$-invariant. Assume that such an $M$ exists and $v = (a,b,c,0)^T$ is a non-zero element of $M$. We get
\[
\def\arraystretch{1.5}
Jv=\begin{bmatrix}
-\frac{1}{3}(3a+\sqrt3b+3c)\\
(3+\sqrt3)a\\ 
\frac{1}{3}((3-3\sqrt3)a+\sqrt3+3c)\\
(1+\sqrt3)a+b+(1+\sqrt3)c
\end{bmatrix}.
\]
Since the last coordinate of $Jv$ is zero it follows that $b = a+c = 0$, $v = (a,0,-a,0)$ and $M$ is of rank at most $1$, a contradiction.
\end{example}

\section{Changing complex structures}

As in the previous sections, denote by $K$ the ring of integers or the field of real or complex numbers. Let $V$ be a $K G$-module with a complex structure $J$. We have obvious decomposition of the $\CC G$-module $V^{\CC}$
\[
V^{\CC} = V_J^{1,0} \oplus V_J^{0,1},
\]
where $V_J^{1,0}$ and $V_J^{0,1}$ are the eigenspaces of the action of $J$ with the eigenvalues $i$ and $-i$ respectively. Motivating by geometry, we will call them \emph{holomorphic} and \emph{anti-holomorphic} parts of $V$. In the case when $J$ is fixed we will drop the subscript in the notation.

Note that if we have two complex structures on $V$, say $J$ and $J'$, then $V_J^{1,0}$ and $V_{J'}^{1,0}$ are isomorphic as $\R G$-modules. However, they may be non-isomorphic as $\CC G$-modules. We will deal with this problem now in the case $K=\R$. Note that it is enough to consider homogeneous modules, since they are preserved by all complex structures.

\begin{lemma}
\label{lemma:isomorphism-of-holomorphic-parts}
Let $V$ be a homogeneous $\R G$-module with complex structures $J$ and $J'$. Then $V_J^{1,0}$ and $V_{J'}^{1,0}$ are isomorphic in one of the following cases:
\begin{enumerate}[label=(\alph*)]
    \item the simple component of $V$ is of real or quaternionic type;
    \item the complex structures are conjugated in $\Aut_{\R G}(V)$.
\end{enumerate}
\end{lemma}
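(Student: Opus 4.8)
The plan is to reduce both cases to an understanding of how $V^\CC$ decomposes into $\CC G$-irreducibles, which is governed by the Frobenius--Schur type of the simple component of $V$. Write $V \cong kW$ with $W$ a simple $\R G$-module and $k$ its multiplicity in $V$. By Remark \ref{remark:complex-quaternionic-case} the type of $W$ is recorded by $\End_{\R G}(W) \cong F$, and correspondingly $W^\CC$ decomposes as follows: if $W$ is of real type then $W^\CC$ is irreducible; if $W$ is of quaternionic type then $W^\CC \cong 2U$ for a single irreducible $U$ with $U \cong \overline{U}$; and if $W$ is of complex type then $W^\CC \cong U \oplus \overline{U}$ with $U \not\cong \overline{U}$. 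In particular, in the real and quaternionic cases $V^\CC \cong m U$ is isotypic (a multiple of a single irreducible $\CC G$-module $U$), whereas in the complex case it is not.

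For case (a), I would exploit that $V^\CC$ is isotypic. Both $V_J^{1,0}$ and $V_{J'}^{1,0}$ are $\CC G$-submodules of $V^\CC$, and, being complex-conjugate eigenspaces of the real operator $J$ (resp.\ $J'$), each has complex dimension $\tfrac12 \dim_\CC V^\CC$. Since every $\CC G$-submodule of the isotypic module $mU$ is isomorphic to $jU$ with $j$ determined by its dimension, both holomorphic parts are isomorphic to $\tfrac{m}{2}U$, hence to each other. This case needs no hypothesis on $J,J'$ beyond their existence.

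For case (b), suppose $J' = \phi J \phi^{-1}$ with $\phi \in \Aut_{\R G}(V)$. I would extend $\phi$ to the $\CC G$-automorphism $\phi^\CC := \phi \otimes \mathrm{id}$ of $V^\CC$ and check directly that it carries the $i$-eigenspace of $J$ to the $i$-eigenspace of $J'$: if $J^\CC v = i v$ then $J'^\CC(\phi^\CC v) = \phi^\CC J^\CC v = i\,\phi^\CC v$. Thus $\phi^\CC$ restricts to a $\CC G$-isomorphism $V_J^{1,0} \xrightarrow{\sim} V_{J'}^{1,0}$.

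The only genuinely non-routine point is the decomposition of $W^\CC$ in terms of the Frobenius--Schur type; once that is in hand, case (a) is an isotypicity and dimension-count argument and case (b) is a formal conjugation argument. I expect the main care to be needed in confirming that the real/quaternionic dichotomy yields isotypic $V^\CC$, as opposed to the complex type, where the eigenspaces may genuinely mix $U$ and $\overline{U}$ in different proportions, so that the conclusion can fail without hypothesis (b).
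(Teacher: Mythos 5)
Your proof is correct and follows the same route as the paper's: case (a) is exactly the paper's observation that $V^\CC$ is homogeneous (isotypic) for real or quaternionic type, which you justify by the decomposition of $W^\CC$ by Frobenius--Schur type plus the dimension count on the eigenspaces, and case (b) is the paper's conjugation argument $V_{J'}^{1,0} = A V_J^{1,0}$ made explicit via $\phi^\CC = \phi \otimes \mathrm{id}$. You simply fill in details the paper leaves as ``obvious'' and ``easy to check.''
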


\begin{proof}
The case (a) is obvious, since then $V^\CC$ is homogeneous. Now assume that $J' = AJA^{-1}$ for some $A \in \Aut_{\R G}(V)$. It is easy to check that $V_{J'}^{1,0} = A V_J^{1,0}$. 
\end{proof}

\begin{remark}
\label{remark:conjugacy_of_complex_structures}
Note that in the case (b) of the above lemma it is somehow easy to determine the conjugacy class of a complex structure $J$ in $\Aut_{\R G}(V)$, since if $S$ is a simple component of $V$ and it is of multiplicity $n$, then
\[
\Aut_{\R G}(V) \cong \GL_n(\End_{\R G}(S)) \cong \GL_n(\CC).
\]
Identifying $J$ as an element of $\GL_n(\CC)$ it is enough to count its eigenvalues, which are of course $\pm i$.
\end{remark}

\begin{proposition}
\label{proposition:invariance-under-complex-struture}
Let $V$ be a homogeneous $\R G$-module with a complex structure $J$. Let $W$ be an essentially complex submodule of $V$. There exists a complex structure $J'$, such that $J'W = W$ and $V_J^{1,0} \cong V_{J'}^{1,0}$.
\end{proposition}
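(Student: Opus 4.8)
The plan is to exploit semisimplicity together with the eigenvalue description of complex structures from Remark \ref{remark:conjugacy_of_complex_structures}. Write $V \cong nS$ for a simple $\R G$-module $S$ with $F := \End_{\R G}(S)$, so that $\Aut_{\R G}(V) \cong \GL_n(F)$ and a complex structure on $V$ is exactly an element of $\GL_n(F)$ squaring to $-I$. By Maschke's theorem I choose a $G$-invariant complement $U$ with $V = W \oplus U$; since $W$ and $U$ are submodules of a homogeneous module, they satisfy $W \cong mS$ and $U \cong (n-m)S$ for some $0 \le m \le n$. I will produce $J'$ as a direct sum $J_W \oplus J_U$ of complex structures on the two summands; such a $J'$ automatically satisfies $J'W = W$, so the whole task reduces to arranging that $J'$ lies in the same $\Aut_{\R G}(V)$-conjugacy class as $J$, for then Lemma \ref{lemma:isomorphism-of-holomorphic-parts}(b) gives $V_J^{1,0} \cong V_{J'}^{1,0}$.

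First I check that the complement $U$ is itself essentially complex, so that $J_U$ exists. If $S$ is of complex or quaternionic type this is automatic, since by Remark \ref{remark:complex-quaternionic-case} every module of that type is essentially complex. If $S$ is of real type, then by Theorem \ref{theorem:complex-r-module} the essential complexity of $V \cong nS$ and of $W \cong mS$ forces both $n$ and $m$ to be even; hence $n-m$ is even and $U \cong (n-m)S$ is essentially complex as well. Thus in every case both summands are essentially complex, so each admits a complex structure.

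The remaining, and main, point is the matching of conjugacy classes, which is a genuine constraint only in the complex type. When $S$ is of real or quaternionic type, $V^\CC$ is homogeneous, so by Lemma \ref{lemma:isomorphism-of-holomorphic-parts}(a) \emph{any} complex structure whatsoever yields a holomorphic part isomorphic to $V_J^{1,0}$, and $J' := J_W \oplus J_U$ works for arbitrary choices of $J_W, J_U$. When $S$ is of complex type we have $F = \CC$ and $\Aut_{\R G}(V) \cong \GL_n(\CC)$; regarding $J$ as an element of $\GL_n(\CC)$ it is diagonalizable with eigenvalues $\pm i$ of some multiplicities $p$ and $q = n-p$, and, as in Remark \ref{remark:conjugacy_of_complex_structures}, its conjugacy class --- equivalently the isomorphism type of $V_J^{1,0}$ --- is recorded exactly by the pair $(p,q)$ (in fact $V_J^{1,0} \cong p\,T \oplus q\,\overline{T}$, where $S^\CC \cong T \oplus \overline{T}$). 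I therefore choose $J_W$ and $J_U$ to be diagonal complex structures in $\GL_m(\CC)$ and $\GL_{n-m}(\CC)$ of signatures $(p_1,q_1)$ and $(p_2,q_2)$; the direct sum then has signature $(p_1+p_2, q_1+q_2)$, and since $p$ lies in $[0,n]=[0,m]+[0,n-m]$ I can solve $p_1+p_2 = p$ with $0 \le p_1 \le m$ and $0 \le p_2 \le n-m$ (for instance $p_1 = \min(p,m)$, $p_2 = p-p_1$). With these choices $J'$ has signature $(p,q)$, hence is conjugate to $J$, and Lemma \ref{lemma:isomorphism-of-holomorphic-parts}(b) finishes the argument.

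The step I expect to require the most care is the complex-type bookkeeping: making precise that the isomorphism class of the holomorphic part depends only on the eigenvalue multiplicities $(p,q)$ of the complex structure viewed inside $\GL_n(\CC)$, and that these add under the block decomposition $\GL_m(\CC) \times \GL_{n-m}(\CC) \subset \GL_n(\CC)$ induced by $V = W \oplus U$. Everything else --- existence of the complement, the parity argument in the real case, and the realizability of an arbitrary signature by a diagonal complex structure --- is routine.
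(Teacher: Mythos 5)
Your proof is correct and follows essentially the same route as the paper's: decompose $V = W \oplus W'$, handle the real and quaternionic types via Lemma \ref{lemma:isomorphism-of-holomorphic-parts}(a), and in the complex type identify $\Aut_{\R G}(V)$ with $\GL_n(\CC)$ and match the $\pm i$ eigenvalue multiplicities of a block-diagonal $J'$ to those of $J$, concluding by Lemma \ref{lemma:isomorphism-of-holomorphic-parts}(b). Your explicit parity check that the complement $U$ is itself essentially complex in the real-type case is a point the paper leaves implicit, but it does not change the argument.
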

\begin{proof}
Let $V = W\oplus W'$ and let $S$ be a simple component of $V$. If $S$ is of real or quaternionic type, by Lemma \ref{lemma:isomorphism-of-holomorphic-parts} it is enough to take $J' = J_W \oplus J_{W'}$ where $J_W$ and $J_{W'}$ are any complex structures on $W$ and $W'$ respectively.

Assume that $S$ is of complex type. Denote by $n$ and $k$ the multiplicity of $S$ in $V$ and $W$ respectively. We get that
\begin{equation}
\label{eq:invariant-under-j}
W = \bigoplus_{i=1}^k S \text{ and } W' = \bigoplus_{i=k+1}^n S.
\end{equation}
Identifying $\Aut_{\R G}(V)$ with $\GL_n(\CC)$ as in Remark \ref{remark:conjugacy_of_complex_structures}, assume that a Jordan form of $J$ is $\diag(a_1,\ldots,a_n)$, where $a_i = \pm i$ for $i=1,\ldots,n$. By the same remark and the form \eqref{eq:invariant-under-j} of $W$ and $W'$ it is enough to take $J'=\diag(a_1,\ldots,a_n)$.
\end{proof}

\begin{corollary}
\label{corollary:good-complex-structure}
Let $L$ be a $G$-module with a complex structure $J$. Assume that $M$ is an essentially complex $\Z$-pure sublattice of $L$. There exists a complex structure $J'$ on $L$ such that $M^\R$ is $J'$ invariant and $L_J^{1,0} \cong L_{J'}^{1,0}$. In particular we have $M_{J'}^{1,0} \subset L_{J'}^{1,0}$.
\end{corollary}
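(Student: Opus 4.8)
The plan is to reduce the statement to the homogeneous case already settled in Proposition \ref{proposition:invariance-under-complex-struture} by splitting everything along the isotypic (homogeneous) decomposition of $L^\R$. First I would write
\[
L^\R = \bigoplus_{\lambda} V_\lambda,
\]
where the $V_\lambda$ are the distinct homogeneous components of the $\R G$-module $L^\R$. Since a complex structure is by definition an element of $\End_{\R G}(L^\R)$, it must preserve each isotypic component; hence $J$ decomposes as $J = \bigoplus_\lambda J_\lambda$ with $J_\lambda := J|_{V_\lambda}$ a complex structure on $V_\lambda$.

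Next I would decompose the submodule $M^\R$ compatibly. As an $\R G$-submodule of $L^\R$, its $\lambda$-isotypic part is $W_\lambda := M^\R \cap V_\lambda$, so that $M^\R = \bigoplus_\lambda W_\lambda$ with each $W_\lambda \subset V_\lambda$ a homogeneous submodule. This is the point where the hypothesis enters: because $M^\R$ is essentially complex, the Johnson criterion (Theorem \ref{theorem:complex-r-module}) guarantees that every homogeneous component of $M^\R$ is again essentially complex, so each $W_\lambda$ is essentially complex. This is precisely the input required to invoke the homogeneous result componentwise.

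For each $\lambda$ I would then apply Proposition \ref{proposition:invariance-under-complex-struture} to the homogeneous module $V_\lambda$, its complex structure $J_\lambda$, and its essentially complex submodule $W_\lambda$. This produces a complex structure $J'_\lambda$ on $V_\lambda$ with $J'_\lambda W_\lambda = W_\lambda$ and $(V_\lambda)_{J_\lambda}^{1,0} \cong (V_\lambda)_{J'_\lambda}^{1,0}$ as $\CC G$-modules. Setting $J' := \bigoplus_\lambda J'_\lambda$ gives a complex structure on $L^\R$ under which $M^\R = \bigoplus_\lambda W_\lambda$ is invariant. Since passing to holomorphic parts commutes with the direct sum,
\[
L_J^{1,0} = \bigoplus_\lambda (V_\lambda)_{J_\lambda}^{1,0} \cong \bigoplus_\lambda (V_\lambda)_{J'_\lambda}^{1,0} = L_{J'}^{1,0},
\]
which is the required isomorphism. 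For the final assertion, the invariance of $M^\R$ under $J'$ means that the complexification $M^\CC \subset L^\CC$ is stable under the $\CC$-linear extension of $J'$, so its $+i$-eigenspace $M_{J'}^{1,0}$ is contained in the $+i$-eigenspace $L_{J'}^{1,0}$.

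I do not expect a serious obstacle: the whole weight of the argument sits in Proposition \ref{proposition:invariance-under-complex-struture}, and the only delicate point is the compatibility of the two decompositions, i.e.\ verifying that the $\lambda$-isotypic part of $M^\R$ is exactly $M^\R \cap V_\lambda$ and that it is essentially complex, so that the homogeneous proposition may be applied summand by summand.
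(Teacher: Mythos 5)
Your proof is correct and follows essentially the same route as the paper's: decompose $L^\R$ into homogeneous components, note that $J$, $M^\R$, and the holomorphic parts all split compatibly along this decomposition, and apply Proposition \ref{proposition:invariance-under-complex-struture} summand by summand. The only difference is cosmetic — you spell out, via Theorem \ref{theorem:complex-r-module}, why each isotypic piece of $M^\R$ is essentially complex, a point the paper leaves implicit.
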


\begin{proof}
Let 
\[
L^\R = L_1 \oplus \ldots \oplus L_k
\]
be the decomposition into homogeneous components. We get that
\[
M^\R = M_1 \oplus \ldots \oplus M_k
\]
is also a decomposition into homogeneous components and
\[
J = J_1 \oplus \ldots \oplus J_k,
\]
where for every $1 \leq i \leq k$, $J_i$ is a complex structure of $L_i$ and $M_i  = M^\R \cap L_i$. By Proposition \ref{proposition:invariance-under-complex-struture} for every $1 \leq i \leq k$ there exists a complex structure $J_i'$ of $L_i$, giving isomorphic holomorphic part and for which $J_i' M_i = M_i$. Taking $J' = J_1' \oplus \ldots \oplus J_k'$ and observing that 
\[
L_J^{1,0} = (L_1)_{J_1}^{1,0} \oplus \ldots \oplus (L_k)_{J_k}^{1,0}
\]
(similar for $J'$) we get desired result.
\end{proof}

\section{Holomorphic tangent bundles}

Let an aspherical K\"ahler group $\Gamma$ of dimension $n$ be given by the short exact sequence \eqref{eq:exact} and $X = \R^{2n}/\Gamma$. By proof of \cite[Proposition 1.1]{V70} the tangent bundle of $X$ is given by
\[
TX = (\tilde X \times L^\R)/\Gamma,
\]
where the action of $\Gamma$ on $\tilde X \times L^\R$ is given by
\[
\gamma (x,v) = (\gamma \cdot x, d\gamma \cdot v),
\]
for $\gamma \in \Gamma, x \in \tilde X, v \in L^\R$.
Note that the universal cover $\tilde X$ equals $\R^{2n}$ and for $\gamma = (A,a) \subset \GL(2n,\R) \ltimes \R^{2n}, d\gamma = A$, so the action of $\Gamma$ on $L^\R$ comes exactly from the $G$-module $L$. Let $J$ be a complex structure on $X$. Denote by $X_J$ the corresponding generalized hyperelliptic manifold. By \cite[Proposition 2.6.4]{Hu05}, up to isomorphism of complex vector bundles, the holomorphic tangent bundle of $X_J$ is given by
\[
TX_J^{1,0} = (\tilde X \times L_J^{1,0})/\Gamma.
\]

Let $M \subset L$ be an essentially complex submodule such that $\Delta = \Gamma/M$ is torsion-free. By \cite[Main Theorem 2.3]{V70} we get a submersion $f \colon X \to Y$, given by the natural homomorphism $\Gamma \to \Delta$, where $\pi_1(Y) = \Delta$. Moreover, by \cite[Lemmas 2.6 and 2.7]{V70} we have the short exact sequence of real vector bundles
\[
0 \longrightarrow \ker \rho \longrightarrow TX \stackrel{\rho}{\longrightarrow} f^*(TY) \longrightarrow 0,
\]
where $\ker \rho = (\tilde X \times M^\R)/\Gamma$ is a pullback of a vector bundle over $Y$.

\begin{remark}
Note that if $f$ is holomorphic, then by Lemma \ref{lemma:properties-of-holomorphic-map} the complex structure on $Y$ is fixed. For the sake of making notation as clear as possible we will not give any new symbol to it.
\end{remark}

\begin{theorem}
Let $X$ be a flat manifold with a complex structure $J$. There exists a complex structure $J'$ on X and a compact flat K\"ahler manifold $Y$ such that the following hold:
\begin{enumerate}
    \item There exist a holomorphic submersion $f \colon X_{J'} \to Y$.
    \item $TX_J^{1,0}$ and $TX_{J'}^{1,0}$ are isomorphic complex vector bundles.
    \item $TX_{J'}^{1,0}$ is isomorphic to a pullback of a complex vector bundle over $Y$. 
\end{enumerate}
\end{theorem}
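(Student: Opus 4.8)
The plan is to keep the essentially complex $\Z$-pure submodule $M \subset L$ fixed in this section (so that $\Delta = \Gamma/M$ is torsion-free) and to produce $J'$ by adapting $J$ to $M$ via the results of Section 6. First I would invoke Corollary~\ref{corollary:good-complex-structure} for the $G$-lattice $L$, its complex structure $J$, and the essentially complex $\Z$-pure sublattice $M$: this yields a complex structure $J'$ on $L$ for which $M^\R$ is $J'$-invariant and $L_J^{1,0} \cong L_{J'}^{1,0}$ as $\CC G$-modules. I write $X_{J'}$ for the associated generalized hyperelliptic manifold; this $J'$ is the structure the theorem asks for, and the remaining three properties should drop out of the properties it enjoys.

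For~(1), since $\ker f = M^\R$ is now $J'$-invariant, Corollary~\ref{corollary:holomorphic-iff-condition} makes the linear map $f$, and hence the submersion $f \colon X_{J'} \to Y$, holomorphic. The restriction of $J'$ to $M^\R$ and the complex structure $J''$ it induces on $(L/M)^\R$ both commute with the $G$-action (the projection $L^\R \to (L/M)^\R$ is $G$-equivariant and intertwines $J'$ with $J''$), and by Lemma~\ref{lemma:properties-of-holomorphic-map}(2) this $J''$ is precisely the complex structure carried by $Y$. As $\Delta$ is a torsion-free Bieberbach group whose holonomy representation $(L/M)^\R$ now carries the $G$-invariant complex structure $J''$, it is an aspherical K\"ahler group, so $Y$ is a compact flat K\"ahler manifold and~(1) holds.

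For~(2), the $\CC G$-isomorphism $\varphi \colon L_J^{1,0} \to L_{J'}^{1,0}$ supplied by Corollary~\ref{corollary:good-complex-structure} commutes with every differential $d\gamma$ (these act through the holonomy $G$), so $(x,v) \mapsto (x, \varphi(v))$ is $\Gamma$-equivariant on $\tilde X \times L^{1,0}$ and descends to an isomorphism of complex vector bundles $TX_J^{1,0} \cong TX_{J'}^{1,0}$. For~(3) the decisive point is that $L \subset \Gamma$ acts trivially on $L^\R$ through the differentials, because translations have trivial derivative; hence the $\Gamma$-action on $L_{J'}^{1,0}$ factors through $G$, and a fortiori through $\Delta = \Gamma/M$. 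Therefore the flat bundle $E := (\tilde Y \times L_{J'}^{1,0})/\Delta$ associated with the $\Delta$-module $L_{J'}^{1,0}$ is a genuine complex vector bundle on $Y$, and pulling back along $f$ (which is induced by $\Gamma \to \Delta$) gives $f^*E = (\tilde X \times L_{J'}^{1,0})/\Gamma = TX_{J'}^{1,0}$, which is~(3).

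I expect the main obstacle to be organizational rather than computational: the real content is already packaged in Corollary~\ref{corollary:good-complex-structure}, which simultaneously secures the $J'$-invariance of $M^\R$ (needed for holomorphicity in~(1)) and the $\CC G$-isomorphism of holomorphic parts (needed for~(2)). The most delicate verifications are that the induced structure $J''$ genuinely turns $Y$ into a K\"ahler manifold for which $f$ is holomorphic, and that the holomorphic tangent bundle descends as a pullback; both reduce to checking $G$-equivariance of the maps in play together with the triviality of the $L$-action on the fibre $L_{J'}^{1,0}$, so that all identifications are compatible with the $\Gamma$-actions defining the associated bundles.
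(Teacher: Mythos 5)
Your proposal is correct, and for parts (1) and (2) it follows the paper's own route: Corollary~\ref{corollary:good-complex-structure} produces $J'$ with $M^\R$ invariant and $L_J^{1,0}\cong L_{J'}^{1,0}$, and Corollary~\ref{corollary:holomorphic-iff-condition} makes $f$ holomorphic; your explicit check that the induced quotient structure $J''$ is $G$-equivariant, so that $\Delta$ is an aspherical K\"ahler group and $Y$ is genuinely flat K\"ahler, is a point the paper leaves implicit. For part (3), however, you take a genuinely different and more economical route. The paper passes to holomorphic parts of Vasquez's exact sequence $0 \to \ker\rho \to TX \to f^*(TY) \to 0$, notes $M_{J'}^{1,0}\subset L_{J'}^{1,0}$, and exhibits $TX_{J'}^{1,0}$ as the Whitney sum of $\ker\pi$ (a pullback by \cite[Lemmas 2.6 and 2.7]{V70}) and $f^*(TY^{1,0})$, implicitly using semisimplicity of $\CC G$ to split the fibre. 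You instead observe that the $\Gamma$-action on the fibre $L_{J'}^{1,0}$ factors through $G$, hence through $\Delta=\Gamma/M$, so the \emph{entire} flat bundle descends: $E=(\tilde Y\times L_{J'}^{1,0})/\Delta$ is a complex vector bundle on $Y$ and the map $[x,v]\mapsto\bigl([x],[\tilde f(x),v]\bigr)$ gives $f^*E\cong(\tilde X\times L_{J'}^{1,0})/\Gamma = TX_{J'}^{1,0}$ (its well-definedness and injectivity use only that torsion-free $\Delta$ acts freely on $\tilde Y$, which you should state). This bypasses the bundle exact sequence entirely; what the paper's argument buys in exchange is the concrete identification of the descended bundle as $TY^{1,0}$ plus the descended kernel, i.e.\ the fibration-theoretic picture. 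It is worth noticing that your argument for (3) never uses the change of structure: since any complex structure $J$ lies in $\End_{\R G}(L^\R)$, the eigenspace $L_J^{1,0}$ is already a $\CC G$-submodule of $L^\CC$, so $TX_J^{1,0}$ is a pullback from the flat manifold $Y$ for the original $J$ as well — the passage to $J'$ is needed only for the holomorphicity statements (1) and (2), not for the pullback property feeding into Theorem~\ref{theorem:chern-classes}.
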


\begin{proof}
We will keep the notation of the discussion preceding the statement of the theorem. By Corollary \ref{corollary:holomorphic-iff-condition}, the map $f$ will be holomorphic for such a complex structure $J'$ that $M^\R$ is $J'$-invariant. Using Corollary \ref{corollary:good-complex-structure} we get that not only such $J'$ exists, but it gives us isomorphism of $TX_J^{1,0}$ and $TX_{J'}^{1,0}$. Moreover, since $M_{J'}^{1,0} \subset L_{J'}^{1,0}$, we have a short exact sequence of complex vector bundles
\[
0 \longrightarrow \ker \pi \longrightarrow TX_{J'}^{1,0} \stackrel{\pi}{\longrightarrow} f^*(TY^{1,0}) \longrightarrow 0,
\]
where $\ker\pi = (\tilde X \times M_{J'}^{1,0})/\Gamma = \ker \rho^{1,0}$, hence it is a pullback of some complex vector bundle over $Y$. By the construction, $TX_{J'}^{1,0}$ is isomorphic to the vector bundle $(\tilde X \times M^{1,0} \oplus L^{1,0}/M^{1,0})/\Gamma$ which is exactly the Whitney sum of $\ker \pi$ and $f^*(TY^{1,0})$. This finishes the proof.
\end{proof}

The proof of the second main theorem of the paper is now formality.

\begin{proof}[Proof of Theorem \ref{theorem:chern-classes}]
Keeping notation of this section, for $i \in \N$ we have
\[
c_i(X) = f^*(c_i(E)),
\]
where $E$ is a complex vector bundle over $Y$. For $i > n_{\CC}(G) \geq \dim_{\CC}(Y)$ we have
\[
c_i(E) \in H^{2i}(Y,\Z) = 0
\]
and the result follows.
\end{proof}

\begin{remark}
In the language of \cite{CC17} we say that $L$ with complex structures $J$ and $J'$ have the same Hodge type. Using description of the space of complex structures on $X$, given for example in \cite{BR11}, we can show more: $J'$ may be constructed in a way that there is a continuous path of complex structures on $X$ between $J$ and $J'$. Using deformation theory gives another way of showing that $TX_J^{1,0}$ and $TX_{J'}^{1,0}$ are isomorphic.
\end{remark}

\section*{Acknowledgments}

The authors would like to thank Andrzej Szczepański for helpful discussions.

\printbibliography

\noindent
Anna Gąsior\\
Institute of Mathematics, Maria Curie-Sk{\l}odowska University\\
Pl. Marii Curie-Sk{\l}odowskiej 1\\
20-031 Lublin\\
Poland\\
E-mail: \texttt{anna.gasior@mail.umcs.pl}
\vskip 5mm
\noindent
Rafał Lutowski\\
Institute of Mathematics, Faculty of Mathematics, Physics and Informatics, University of Gda\'nsk\\
ul. Wita Stwosza 57\\
80-952 Gda\'nsk\\
Poland\\
E-mail: \texttt{rafal.lutowski@ug.edu.pl}

\end{document}